\newtheorem{theo}{Theorem}[section]
\newtheorem{cor}[theo]{Corollary}
\newtheorem{prop}[theo]{Proposition}
\begin{document}

\title{The expected values of Sombor indices in random hexagonal chains, phenylene chains and Sombor indices of some chemical graphs \thanks{This work is supported by the National Natural Science Foundation of China (Grant No. 11971180), the Guangdong Provincial Natural Science Foundation (Grant No. 2019A1515012052).}}
\author{Xiaona Fang, Lihua You\thanks{Corresponding author: ylhua@scnu.edu.cn}, Hechao Liu\\
{\small School of Mathematical Sciences, South China Normal University,}\\ {\small Guangzhou, 510631, P. R. China}}
\date{}
\maketitle

{\bf Abstract}: Hexagonal chains are a special class of catacondensed benzenoid system and phenylene chains are a class of polycyclic aromatic compounds. Recently, A family of Sombor indices was introduced by Gutman in the chemical graph theory. It had been examined that these indices may be successfully applied on modeling thermodynamic properties of compounds. In this paper, we study the expected values of the Sombor indices in random hexagonal chains, phenylene chains, and consider the Sombor indices of some chemical graphs such as graphene, coronoid systems and carbon nanocones. 

{\bf Keywords}: Sombor index; random hexagonal chain; random phenylene chain; chemical graph; expected value.

\baselineskip=0.30in

\section{Introduction}
\hspace{1.5em}
Chemical graph theory is an interdisciplinary field of science which relates chemistry with a branch of mathematical modeling of graphs. Topological indices are graph invariants that play an important role in chemical and pharmaceutical sciences, since they can be used to predict physicochemical properties of organic compounds \cite{tri2018}. There are lots of topological indices in the literature of chemical graph theory. Recently, Gutman introduces a family of Sombor indices in the chemical graph theory \cite{gumn2021}. It was examined in \cite{redz2021} that the Sombor index, reduced Sombor index and average Sombor index showed satisfactory predictive and discriminative potential in modeling entropy and enthalpy of vaporization of alkanes. The results of testing predictive potential of Sombor indices indicate that these descriptors may be successfully applied on modeling thermodynamic properties of compounds.

Let $G=(V,E)$ be a finite, connected, simple graph with vertex set $V=V(G)$ and edge set $E=E(G)$, where $|V(G)|$ is the number of vertices and $|E(G)|$ is the number of edges. We denote the degree of a vertex $i$ in $G$ by $d_i$. The (ordinary) Sombor index is defined as
    $$SO(G)= \sum\limits_{i \sim j}\sqrt{d_i^2+d_j^2},$$
the reduced Sombor index is defined as
 $$SO_{red}(G)= \sum\limits_{i \sim j}\sqrt{(d_i-1)^2+(d_j-1)^2},$$  
 and the average Sombor index, as
$$SO_{avr}(G)= \sum\limits_{i \sim j}\sqrt{(d_i-\bar{d})^2+(d_j-\bar{d})^2},$$  
where $\bar{d}=\frac{2\cdot |E(G)|}{|V(G)|}$ is the average degree of graph $G$ \cite{gumn2021}. In this paper, Sombor indices refer to Sombor index, reduced Sombor index and average Sombor index.
Let $a$ be any real number or parameter of graph $G$. We generalize the Sombor indices with $a$. The generalized index is defined as
\begin{equation}\label{soma}
	SO_{a}(G)= \sum\limits_{i \sim j}\sqrt{(d_i-a)^2+(d_j-a)^2}.
\end{equation} 
It's clear when $a=0$, $SO_{a}(G)=SO(G)$, when $a=1$, $SO_{a}(G)=SO_{red}(G)$ and when $a=\bar{d}$, $SO_{a}(G)=SO_{avr}(G)$.

Sombor indices have attracted much attention due to good chemical applicability. Cruz, Gutman and Rada characterized the extremal graphs of the chemical graphs, chemical trees and hexagon systems with respect to Sombor index \cite{crug2021}. In \cite{alig2021}, the Sombor index of polymer graphs which can be decomposed into monomer units was considered. In \cite{crur2021}, the extremal values of the Sombor index in unicyclic and bicyclic graphs were studied. Das, Cevik, Cangul and Shang presented lower and upper bounds on the Sombor index of graphs by using some graph parameters and obtain several relations on Sombor index with the first and second Zagreb indices of graphs \cite{dasc2021}. More results of Sombor indices can be found in \cite{dasg2021,dengt2021,gumn2021,guma2021,horx2021,kulg2021,liuc2021,liuy2021,liuyh2021,milo2021,redz2021,trdo2021,wmlf2021}. In Section \ref{sec2}, we study the expected values of the Sombor indices in the random hexagonal chains and random phenylene chains, and make a comparison between the expected values. In Section \ref{sec3}, we study the Sombor indices of some graphs that are of importance in chemistry such as graphene, coronoid systems and carbon nanocones, and give numerical comparison of the Sombor indices and graphical profiles of the comparison.

\section{The expected values of Sombor indices in random hexagonal chains and phenylene chains}\label{sec2}

\hspace{1.5em}
Random molecular graphs are of great importance for theoretical chemistry. There are many results about the extremal values of topological indices of random molecular graphs in recent years \cite{jaha2020,liuh2021,raza2020,raza2020b,yangz2012}. In this section, we study the expected values of the Sombor indices in random hexagonal chains and phenylene chains.

We say an edge is $(i,j)$-type if it joins a vertex with degree $i$ and a vertex with degree $j$ in $G$. Let $m_{ij}(G)$ be the number of edges of $(i,j)$-type. Then we have the following Proposition.

\begin{prop}\label{main}
	Let $G$ be a graph. If there exists only $(2,2)$, $(2,3)$ and $(3,3)$-type of edges in $G$, then we have
	\begin{equation}\label{sored}
	SO_{a}(G)=\sqrt{2}\cdot |2-a| \cdot m_{22}(G)+\sqrt{2a^2-10a+13} \cdot m_{23}(G)+\sqrt{2}\cdot |3-a|\cdot  m_{33}(G).  
	\end{equation} 	
\end{prop}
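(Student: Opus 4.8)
The plan is to argue directly from the definition \eqref{soma}, partitioning the edge sum according to edge type. By hypothesis every edge of $G$ is of $(2,2)$-, $(2,3)$- or $(3,3)$-type, so $E(G)$ is the disjoint union of these three classes, whose cardinalities are $m_{22}(G)$, $m_{23}(G)$ and $m_{33}(G)$. Hence
\[
SO_{a}(G)=\sum_{\substack{i\sim j\\ d_i=d_j=2}}\sqrt{(d_i-a)^2+(d_j-a)^2}
+\sum_{\substack{i\sim j\\ \{d_i,d_j\}=\{2,3\}}}\sqrt{(d_i-a)^2+(d_j-a)^2}
+\sum_{\substack{i\sim j\\ d_i=d_j=3}}\sqrt{(d_i-a)^2+(d_j-a)^2}.
\]

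Next I would evaluate the summand on each of the three classes, where it is constant. On a $(2,2)$-type edge the radicand is $(2-a)^2+(2-a)^2=2(2-a)^2$, so the term equals $\sqrt{2}\,|2-a|$; likewise on a $(3,3)$-type edge it equals $\sqrt{2}\,|3-a|$. On a $(2,3)$-type edge the radicand is $(2-a)^2+(3-a)^2=(a^2-4a+4)+(a^2-6a+9)=2a^2-10a+13$, which is a sum of two squares and hence nonnegative, so the term equals $\sqrt{2a^2-10a+13}$.

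Summing a constant over each class then gives the contributions $\sqrt{2}\,|2-a|\cdot m_{22}(G)$, $\sqrt{2a^2-10a+13}\cdot m_{23}(G)$ and $\sqrt{2}\,|3-a|\cdot m_{33}(G)$, and adding the three yields \eqref{sored}. There is no genuine obstacle in this argument; the only point deserving care is to retain the absolute values $|2-a|$ and $|3-a|$ (rather than $2-a$, $3-a$) when simplifying $\sqrt{2(2-a)^2}$ and $\sqrt{2(3-a)^2}$, since the identity must hold for an arbitrary real parameter $a$ — in particular for $a=\bar{d}$, which may exceed $2$ or even $3$.
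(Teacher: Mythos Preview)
Your proof is correct and follows essentially the same approach as the paper: partition the sum in \eqref{soma} by edge type, evaluate the constant contribution on each class, and multiply by the corresponding edge count. Your added remarks about nonnegativity of $2a^2-10a+13$ and the need for absolute values are sound and in fact make the argument slightly more careful than the paper's version.
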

\begin{proof}
	Since there exists only $(2,2)$, $(2,3)$ and $(3,3)$-type of edges in $G$, by (\ref{soma}), we have 
	\begin{align*}
		SO_{a}(G)&= \sqrt{(2-a)^2+(2-a)^2} \cdot m_{22}(G)+\sqrt{(2-a)^2+(3-a)^2} \cdot m_{23}(G)\\
		&+\sqrt{(3-a)^2+(3-a)^2} \cdot  m_{33}(G) \\
		&=\sqrt{2}\cdot |2-a| \cdot m_{22}(G)+\sqrt{2a^2-10a+13} \cdot m_{23}(G)+\sqrt{2}\cdot |3-a|\cdot  m_{33}(G). 
	\end{align*}
	
	The proof is completed.
\end{proof}

\subsection{Random hexagonal chains}
\hspace{1.5em}
A benzenoid system is a finite connected subgraph of the infinite hexagonal lattice without cut vertices or non-hexagonal interior faces. A benzenoid system without any hexagon which has more than two neighboring hexagons is called a hexagonal chain, denoted by $HXG_n$. For $n\geq 3$, the terminal hexagon can be attached in three ways, which results in the local arrangements, we describe as $HXG_n^{1}$, $HXG_n^{2}$, and $HXG_n^{3}$, respectively, see Figure \ref{g2}.

A random hexagonal chain $HXG(n; p_{1},p_{2})$ with $n$ hexagons is a hexagonal chain obtained by stepwise addition of terminal hexagons. At each step $t(=3,4,\cdots,n)$, a random selection is made from one of the three possible constructions: \\
$(1)$ $HXG_{t-1}\rightarrow HXG_{t}^{1}$ with probability $p_{1}$; \\
$(2)$ $HXG_{t-1}\rightarrow  HXG_{t}^{2}$ with probability $p_{2}$; \\
$(3)$ $HXG_{t-1}\rightarrow  HXG_{t}^{3}$ with probability $1-p_{1}-p_{2}$, where $p_{1}$, $p_{2}$ are constants, irrelative to the step parameter $t$.
\begin{figure}[h]
    \centering
	\includegraphics[scale=0.1]{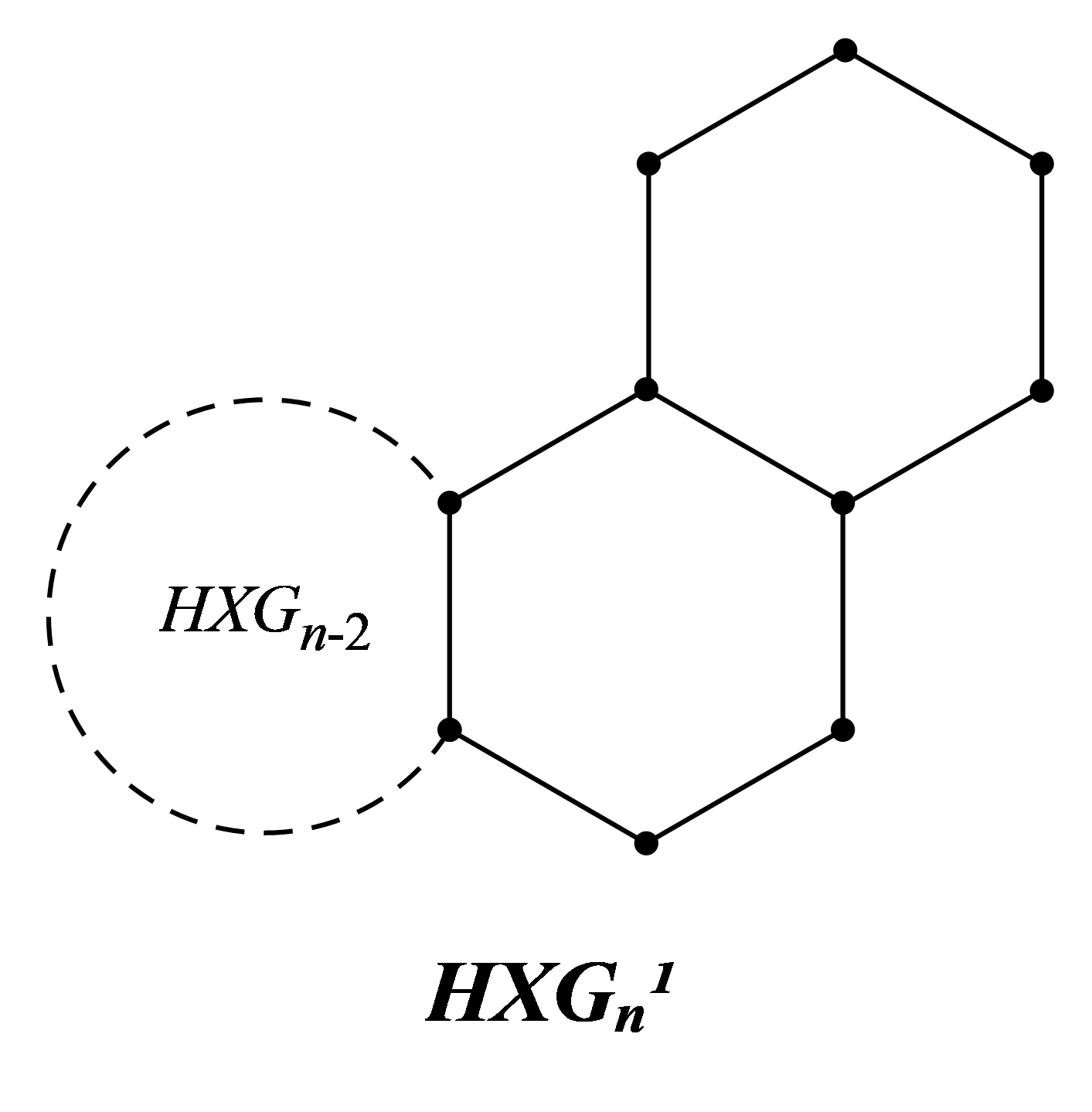}
	\qquad
	\includegraphics[scale=0.1]{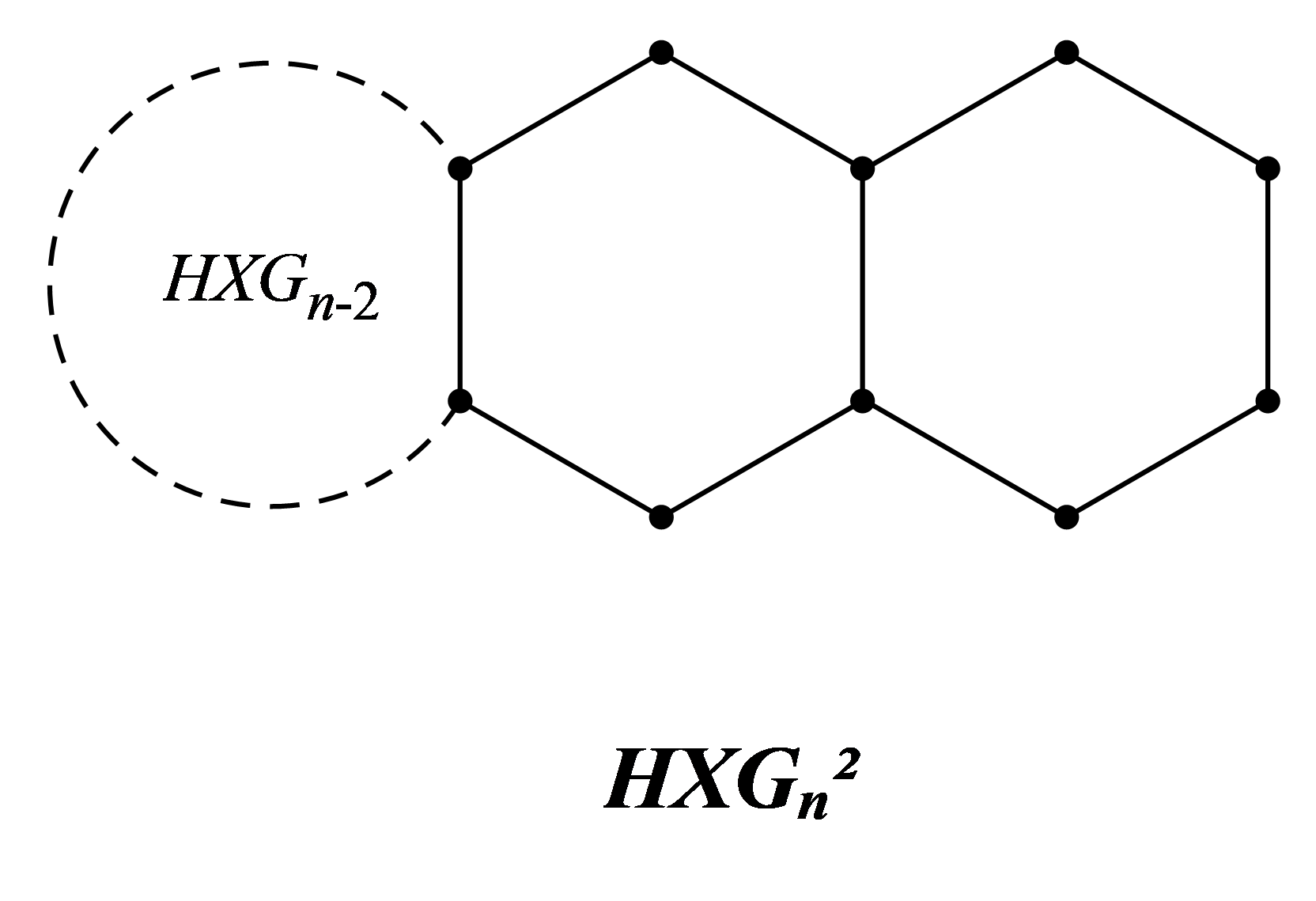}
	\qquad
	\includegraphics[scale=0.1]{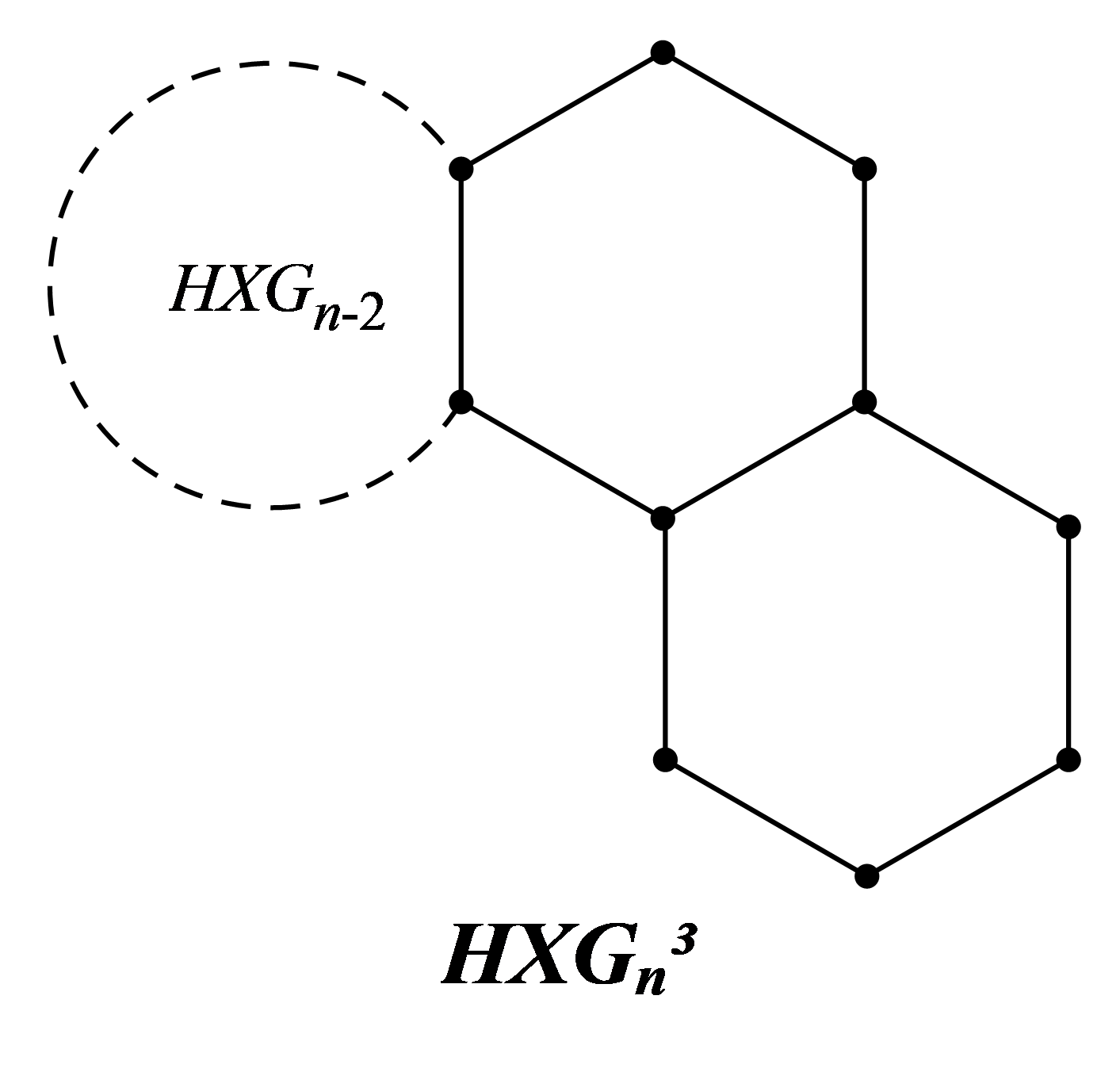}
	\caption{The three types of local arrangements in the hexagonal chains.}
    \label{g2}
\end{figure}

Since $HXG(n; p_1,p_2)$ is a random hexagonal chain, $SO(HXG(n; p_1,p_2))$, $SO_{red}(HXG(n; p_1,p_2))$ and $SO_{avr}(HXG(n; p_1,p_2))$ are random variables. We denote the expected values of these indices by $E_n^a=E[SO_a(HXG(n; p_1,p_2))]$. When $a=0$, $E_n^a=E_n$, when $a=1$, $E_n^a=E_{n}^{red}$ and when $a=\bar{d}$, $E_n^a=E_{n}^{avr}$.
In this section, $a$ is a constant.

\begin{theo}\label{hxg}
	Let $HXG(n; p_1,p_2)$ be the hexagonal chain of length $n(\geq 2)$. Then
\begin{equation}\label{hxga}
\begin{split}
E_n^a=2(np_2-2p_2+n)&\sqrt{2a^2-10a+13}+\sqrt{2}(-np_2+2p_2+n+4)\cdot |2-a|\\
&+\sqrt{2}(-np_2+2p_2+2n-3)\cdot |3-a|,
\end{split}	
\end{equation}
	\begin{equation}\label{hxgso}
	E_n=[(2\sqrt{13}-5\sqrt{2})p_2+8\sqrt{2}+2\sqrt{13}]n+(10\sqrt{2}-4\sqrt{13})p_2-\sqrt{2},
	\end{equation}
	\begin{equation}\label{hxgred}
		E_n^{red}=[(2\sqrt{5}-3\sqrt{2})p_2+5\sqrt{2}+2\sqrt{5}]n+(6\sqrt{2}-4\sqrt{5})p_2-2\sqrt{2},
	\end{equation}
\begin{equation}\label{hxgavr}
	\begin{split}
	&E_n^{avr}=[2(p_2+1)\sqrt{2\bar{d}^2-10\bar{d}+13}+\sqrt{2}(4-p_2-\bar{d})]n \\
	-4p_2 & \sqrt{2\bar{d}^2-10\bar{d}+13}+\sqrt{2}(2p_2+7\bar{d}-17),\  where \ \bar{d}=\frac{5n+1}{2n+1}.
	\end{split}
\end{equation}
\end{theo}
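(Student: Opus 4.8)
The plan is to route everything through Proposition~\ref{main} and reduce the whole problem to the expectation of the single random quantity $m_{23}$. Since the infinite hexagonal lattice has maximum degree $3$, every hexagonal chain has only $(2,2)$-, $(2,3)$- and $(3,3)$-type edges, so Proposition~\ref{main} gives $SO_a(HXG(n;p_1,p_2))=\sqrt2\,|2-a|\,m_{22}+\sqrt{2a^2-10a+13}\,m_{23}+\sqrt2\,|3-a|\,m_{33}$, and by linearity of expectation the same identity holds with each $m_{ij}$ replaced by its expectation. Moreover a hexagonal chain with $n$ hexagons has $4n+2$ vertices and $5n+1$ edges, of which exactly $2(n-1)$ have degree $3$ and $2n+4$ have degree $2$; counting edge-ends at the degree-$2$ vertices and at the degree-$3$ vertices gives the deterministic relations $2m_{22}+m_{23}=2(2n+4)$ and $m_{23}+2m_{33}=6(n-1)$, hence $m_{22}=\frac12(4n+8-m_{23})$ and $m_{33}=\frac12(6n-6-m_{23})$ for \emph{every} hexagonal chain. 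So it suffices to compute $E[m_{23}(HXG(n;p_1,p_2))]$.

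For that I would set up a one-step recurrence. Write the terminal hexagon of $HXG_{t-1}$ cyclically as $v_1v_2v_3v_4v_5v_6$, with $v_1v_2$ the edge shared with the rest of the chain (for $t-1=2$ either hexagon works); then $v_1,v_2$ are its only degree-$3$ vertices, $v_3,v_4,v_5,v_6$ have degree $2$, and the only edges along which the $t$-th hexagon may legally be fused are $v_3v_4$, $v_4v_5$, $v_5v_6$, all of type $(2,2)$. Here $HXG_t^2$ (probability $p_2$) is the ``linear'' fusion along $v_4v_5$, the edge opposite $v_1v_2$, while $HXG_t^1$ and $HXG_t^3$ are the two ``angular'' fusions along $v_3v_4$ and $v_5v_6$. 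A direct bookkeeping of which edges change type shows: fusing along $v_4v_5$ turns $v_4v_5$ into a $(3,3)$-edge, turns $v_3v_4$ and $v_5v_6$ into $(2,3)$-edges, and among the five new edges creates two of type $(2,3)$ and three of type $(2,2)$, for a net $\Delta m_{23}=+4$; fusing along $v_3v_4$ (and symmetrically along $v_5v_6$) turns $v_3v_4$ into a $(3,3)$-edge, turns $v_2v_3$ from $(2,3)$ into $(3,3)$, turns $v_4v_5$ from $(2,2)$ into $(2,3)$, and again creates two new $(2,3)$-edges, for a net $\Delta m_{23}=+2$. Crucially, these increments depend only on the construction chosen at step $t$, not on the earlier history, which is why the answer will involve $p_2$ only.

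Conditioning on the step-$t$ choice, which is independent of $HXG_{t-1}$, we get for $t\ge3$
\[
E[m_{23}(HXG_t)]=E[m_{23}(HXG_{t-1})]+4p_2+2(1-p_2)=E[m_{23}(HXG_{t-1})]+2+2p_2 ,
\]
and a direct count on the two-hexagon graph gives the base value $m_{23}(HXG_2)=4$. Solving the recurrence yields $E[m_{23}(HXG(n;p_1,p_2))]=4+(n-2)(2+2p_2)=2(np_2-2p_2+n)$; substituting this together with the above formulas for $m_{22}$ and $m_{33}$ into the expression for $E_n^a$ from the first paragraph gives \eqref{hxga}. Finally \eqref{hxgso} and \eqref{hxgred} are the specializations $a=0$ and $a=1$ (with $\sqrt{2a^2-10a+13}$ equal to $\sqrt{13}$ and $\sqrt5$ respectively), and \eqref{hxgavr} is the specialization $a=\bar d=\frac{5n+1}{2n+1}$, using that $2<\bar d<3$ for $n\ge2$ so that $|2-\bar d|=\bar d-2$ and $|3-\bar d|=3-\bar d$; all three reductions are just collection of terms. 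The only genuinely non-routine step is the edge-type bookkeeping of the second paragraph; once that is settled, everything else is linear arithmetic in $n$ and $p_2$.
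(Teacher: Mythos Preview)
Your proof is correct. The core case analysis of the three attachments agrees with the paper's (Case~1 and Case~3 give $\Delta m_{23}=+2$, Case~2 gives $\Delta m_{23}=+4$), and your base value $m_{23}(HXG_2)=4$ and the solution $E[m_{23}]=2(np_2-2p_2+n)$ are right; plugging into your deterministic expressions for $m_{22}$ and $m_{33}$ reproduces the coefficients in \eqref{hxga} exactly.

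Where you differ from the paper is in the organization. The paper runs the recurrence directly on the index $E_n^a$: it records in each of the three cases the full triple $(\Delta m_{22},\Delta m_{23},\Delta m_{33})$, converts this into an increment of $SO_a$, takes expectation, and then solves a first-order recurrence for $E_n^a$ itself. You instead observe that the degree sequence of any hexagonal chain with $n$ hexagons is deterministic ($2n+4$ vertices of degree~$2$ and $2(n-1)$ of degree~$3$), derive the edge-end identities $2m_{22}+m_{23}=4n+8$ and $m_{23}+2m_{33}=6(n-1)$, and thereby reduce the whole problem to the single scalar recurrence for $E[m_{23}]$. This buys you a cleaner computation (one recurrence instead of a weighted combination of three increments) and a transparent structural explanation of why only $p_2$ appears in the answer: $\Delta m_{23}$ is the only increment that distinguishes the linear fusion from the two angular ones, and $m_{22},m_{33}$ are affine functions of $m_{23}$ once $n$ is fixed. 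The paper's route, on the other hand, avoids the preliminary degree-counting lemma and is more directly reusable verbatim for other degree-based indices. Both are short and rigorous; yours is the tidier bookkeeping.
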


\begin{proof}
From the structure of the hexagonal chain, it is easy to see that there exists only $(2,2)$, $(2,3)$ and $(3,3)$-type of edges. From Proposition \ref{main}, when $n=2$, $E_2^a=4\sqrt{2a^2-10a+13}+\sqrt{2}(6\cdot |2-a|+|3-a|)$. 

For $n\geq 3$, there are three possibilities to be considered (see Figure \ref{g2}). 
	
	\textbf{Case 1}. $HXG_{n-1}\rightarrow HXG_n^1$.
	$$m_{22}(HXG_n^1)=m_{22}(HXG_{n-1})+1;$$
		$$m_{23}(HXG_n^1)=m_{23}(HXG_{n-1})+2;$$
			$$m_{33}(HXG_n^1)=m_{33}(HXG_{n-1})+2.$$
	Thus, $SO_{a}(HXG_n^1)=SO_{a}(HXG_{n-1})+2\sqrt{2a^2-10a+13}+\sqrt{2}( |2-a|+2\cdot |3-a|).$
	
	\textbf{Case 2}. $HXG_{n-1}\rightarrow HXG_n^2$.
	$$m_{22}(HXG_n^2)=m_{22}(HXG_{n-1})+0;$$
	$$m_{23}(HXG_n^2)=m_{23}(HXG_{n-1})+4;$$
	$$m_{33}(HXG_n^2)=m_{33}(HXG_{n-1})+1.$$
	Thus, $SO_{a}(HXG_n^2)=SO_{a}(HXG_{n-1})+4\sqrt{2a^2-10a+13}+\sqrt{2}\cdot |3-a|.$
	
	\textbf{Case 3}. $HXG_{n-1}\rightarrow HXG_n^3$.
	$$m_{22}(HXG_n^3)=m_{22}(HXG_{n-1})+1;$$
	$$m_{23}(HXG_n^3)=m_{23}(HXG_{n-1})+2;$$
	$$m_{33}(HXG_n^3)=m_{33}(HXG_{n-1})+2.$$
	Thus, $SO_{a}(HXG_n^3)=SO_{a}(HXG_{n-1})+2\sqrt{2a^2-10a+13}+\sqrt{2}( |2-a|+2\cdot |3-a|).$
	
	Therefore, $E_{n}^a=p_1 \cdot SO_a(HXG_n^1)+p_2\cdot  SO_a(HXG_n^2)+(1-p_1-p_2)\cdot SO_a(HXG_n^3)=SO_a(HXG_{n-1})+2(p_2+1)\sqrt{2a^2-10a+13}+\sqrt{2}[(1-p_2)\cdot |2-a|+(2-p_2)\cdot |3-a|]$. Since $E[E_n^a]=E_n^a$, we have
	\begin{equation}\label{re2}
	E_n^a=E_{n-1}^a+2(p_2+1)\sqrt{2a^2-10a+13}+\sqrt{2}[(1-p_2)\cdot |2-a|+(2-p_2)\cdot |3-a|].
	\end{equation}
	After solving the recurrence relation (\ref{re2}) with initial condition, we get (\ref{hxga}).
	
	When $a=0$, we have (\ref{hxgso}). When $a=1$, we have (\ref{hxgred}). 
	Since $|V(HXG(n; p_1, p_2))|=4n+2$, $|E(HXG(n; p_1,p_2))|=5n+1$, we have $2<\bar{d}=\frac{5n+1}{2n+1}<3$. For given $n$, $\bar{d}$ is a constant and therefore we get (\ref{hxgavr}).
\end{proof}

Let $\mathcal{P}_n=HXG(n;0,1)$ and $\mathcal{R}_n=HXG(n;p_1,0)$, where $p_1\in [0,1]$. By Theorem \ref{hxg}, we have

\begin{cor}
	The Sombor indices of $\mathcal{R}_n$ and $\mathcal{P}_n$ are
$$SO_{a}(\mathcal{R}_n)=2n\sqrt{2a^2-10a+13}+\sqrt{2}(n+4)\cdot |2-a|+\sqrt{2}(2n-3)\cdot |3-a|,$$
$$SO_{a}(\mathcal{P}_n)=4(n-1)\sqrt{2a^2-10a+13}+6\sqrt{2}\cdot |2-a|+\sqrt{2}(n-1)\cdot |3-a|.$$
\end{cor}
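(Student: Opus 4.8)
The corollary is an immediate specialization of Theorem~\ref{hxg}, so the plan is simply to substitute the two parameter choices into formula~(\ref{hxga}) and simplify. First I would set $p_2=1$ (and hence $p_1=0$, $1-p_1-p_2=0$) in~(\ref{hxga}) to obtain $SO_a(\mathcal{P}_n)$: the coefficient $2(np_2-2p_2+n)$ becomes $2(n-2+n)=4(n-1)$, the coefficient $\sqrt{2}(-np_2+2p_2+n+4)$ becomes $\sqrt{2}(-n+2+n+4)=6\sqrt{2}$, and the coefficient $\sqrt{2}(-np_2+2p_2+2n-3)$ becomes $\sqrt{2}(-n+2+2n-3)=\sqrt{2}(n-1)$. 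This yields exactly the stated expression for $SO_a(\mathcal{P}_n)$, using that $\mathcal{P}_n=HXG(n;0,1)$ is deterministic so its expected Sombor index equals its Sombor index.

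Next I would set $p_2=0$ in~(\ref{hxga}) to obtain $SO_a(\mathcal{R}_n)$: the coefficient $2(np_2-2p_2+n)$ becomes $2n$, the coefficient $\sqrt{2}(-np_2+2p_2+n+4)$ becomes $\sqrt{2}(n+4)$, and the coefficient $\sqrt{2}(-np_2+2p_2+2n-3)$ becomes $\sqrt{2}(2n-3)$, giving the stated expression for $SO_a(\mathcal{R}_n)$. Here one should note that $\mathcal{R}_n=HXG(n;p_1,0)$ still depends on the free parameter $p_1\in[0,1]$, but since $p_2=0$ the formula~(\ref{hxga}) is independent of $p_1$; equivalently, by Cases~1 and~3 in the proof of Theorem~\ref{hxg}, the transformations $HXG_{t-1}\to HXG_t^1$ and $HXG_{t-1}\to HXG_t^3$ contribute identically to the edge-partition counts $m_{22},m_{23},m_{33}$, so every chain built using only these two operations has the same Sombor index. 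Thus $SO_a(\mathcal{R}_n)$ is well-defined as a number despite $\mathcal{R}_n$ not being a single graph.

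There is no real obstacle here — the only thing to be careful about is the bookkeeping of signs in the three linear-in-$n$ coefficients, and the observation (just mentioned) that the case $p_2=0$ genuinely collapses the $p_1$-dependence so that writing "$SO_a(\mathcal{R}_n)$" without reference to $p_1$ is legitimate. One could optionally record the specializations $a=0,1,\bar d$ as well, but since the statement only asks for the generalized index $SO_a$, substituting $p_2\in\{0,1\}$ into~(\ref{hxga}) and simplifying completes the proof.
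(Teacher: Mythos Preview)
Your proposal is correct and matches the paper's approach exactly: the paper states this corollary with no proof beyond the words ``By Theorem~\ref{hxg}, we have'', so your substitution of $p_2=0$ and $p_2=1$ into~(\ref{hxga}) is precisely what is intended. Your added remark that Cases~1 and~3 contribute identically to the edge counts, so that $SO_a(\mathcal{R}_n)$ is independent of $p_1$, is a worthwhile clarification that the paper leaves implicit.
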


\begin{cor}
	Among all random hexagonal chains $HXG_{n} (n\geq 2)$, we have\\
(1) $(8\sqrt{2}+2\sqrt{13})n-\sqrt{2}  \leq SO(HXG_{n})\leq (3\sqrt{2}+4\sqrt{13})n+9\sqrt{2}-4\sqrt{13}$, with left equality iff $G\cong \mathcal{R}_n$, right equality iff $G\cong \mathcal{P}_n$.\\	
(2) $(5\sqrt{2}+2\sqrt{5})n-2\sqrt{2} \leq SO_{red}(HXG_{n})\leq (2\sqrt{2}+4\sqrt{5})n+4\sqrt{2}-4\sqrt{5},$ with left equality iff $G\cong \mathcal{R}_n$, right equality iff $G\cong \mathcal{P}_n$.\\	
(3) $2n\sqrt{2\bar{d}^2-10\bar{d}+13}+\sqrt{2}[(4-\bar{d})n+7\bar{d}-17] \leq SO_{avr}(HXG_{n}) \leq 4(n-1)\sqrt{2\bar{d}^2-10\bar{d}+13}+\sqrt{2}[(3-\bar{d})n+7\bar{d}-15],\  where \ \bar{d}=\frac{5n+1}{2n+1},$	with left equality iff $G\cong \mathcal{R}_n$, right equality iff $G\cong \mathcal{P}_n$.
\end{cor}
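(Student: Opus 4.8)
The plan is to reduce the problem to a one-parameter family of chains. The key observation, already visible in the proof of Theorem \ref{hxg}, is that the attachments of type $HXG_n^1$ and of type $HXG_n^3$ produce exactly the same increments in $m_{22}$, $m_{23}$, $m_{33}$, hence the same increment in $SO_a$. Consequently, for any hexagonal chain $HXG_n$ with $n\ge 2$ hexagons, the value $SO_a(HXG_n)$ depends only on $n$ and on the number $k$ of terminal hexagons attached in the type-$2$ fashion, where $0\le k\le n-2$. First I would iterate the two recurrences ($+\,2\sqrt{2a^2-10a+13}+\sqrt2(|2-a|+2|3-a|)$ for a type-$1$ or type-$3$ step, and $+\,4\sqrt{2a^2-10a+13}+\sqrt2\,|3-a|$ for a type-$2$ step) starting from $SO_a(HXG_2)=4\sqrt{2a^2-10a+13}+\sqrt2(6|2-a|+|3-a|)$, which yields the closed form
\[
SO_a(HXG_n)=(2n+2k)\sqrt{2a^2-10a+13}+(n+4-k)\sqrt2\,|2-a|+(2n-3-k)\sqrt2\,|3-a| .
\]
This is an affine function of the integer $k\in\{0,1,\dots,n-2\}$ whose slope is $\delta(a):=2\sqrt{2a^2-10a+13}-\sqrt2\,|2-a|-\sqrt2\,|3-a|$.

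Next I would determine the sign of $\delta(a)$ at the three relevant parameters. For $a=0$ one compares $(2\sqrt{13})^2=52$ with $(5\sqrt2)^2=50$; for $a=1$ one compares $(2\sqrt5)^2=20$ with $(3\sqrt2)^2=18$; and for $a=\bar d\in(2,3)$ one uses $|2-\bar d|+|3-\bar d|=(\bar d-2)+(3-\bar d)=1$, so that $\delta(\bar d)>0$ reduces to $4(2\bar d^2-10\bar d+13)>2$, i.e.\ to $(2\bar d-5)^2>0$, which holds because $\bar d=\frac{5n+1}{2n+1}\neq\frac52$ for every admissible $n$. Hence $\delta>0$ in all three cases, so $SO_a(HXG_n)$ is strictly increasing in $k$; it is therefore minimized precisely when $k=0$ and maximized precisely when $k=n-2$.

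Finally I would identify the two extremal chains with these two values of $k$: $k=0$ means that no type-$2$ attachment is used, which is exactly $\mathcal R_n=HXG(n;p_1,0)$ (its Sombor index being independent of $p_1$ by the coincidence of types $1$ and $3$), while $k=n-2$ means type-$2$ attachments are used throughout, which is exactly the linear chain $\mathcal P_n=HXG(n;0,1)$. Substituting $a=0$, $a=1$ and $a=\bar d$ into the formulas for $SO_a(\mathcal R_n)$ and $SO_a(\mathcal P_n)$ given in the preceding corollary (and, for the average index, replacing $|2-\bar d|$ by $\bar d-2$ and $|3-\bar d|$ by $3-\bar d$, then collecting terms in $n$) produces the six displayed bounds together with their equality characterizations. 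The only step that is not pure bookkeeping is the sign analysis of $\delta(a)$ — in particular noticing that the average-degree case collapses to the perfect square $(2\bar d-5)^2$ and that $\bar d$ can never equal $5/2$; the rest follows mechanically from the recurrences established in Theorem \ref{hxg}.
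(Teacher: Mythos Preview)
Your argument is correct and shares the paper's core idea: the index is an affine function of a single ``type-$2$ attachment'' parameter, and one only needs the sign of the slope $2\sqrt{2a^2-10a+13}-\sqrt2\,|2-a|-\sqrt2\,|3-a|$, handled exactly as you do (squaring for $a=0,1$; completing the square $2(\bar d-\tfrac52)^2+\tfrac12\ge\tfrac12$ for $a=\bar d$). The only difference is the parameterization: the paper views $E_n^a$ as linear in the probability $p_2$ and takes the extremes $p_2\in\{0,1\}$, whereas you parameterize individual chains by the integer $k$ of type-$2$ steps and take $k\in\{0,n-2\}$; your version is slightly more direct because it bounds every concrete chain rather than an expected value, and it makes the strictness at $a=\bar d$ (via $(2\bar d-5)^2>0$, $\bar d\ne\tfrac52$) explicit.
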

\begin{proof}
	Since $E_n=(n-2)(2\sqrt{13}-5\sqrt{2})p_2+\sqrt{2}(8n-1)+2\sqrt{13}n$ and $(n-2)(2\sqrt{13}-5\sqrt{2})\geq 0$, $SO(HXG_n)$ reaches the maximum value when $p_2=1$ and reaches the minimum value when $p_2=0$.
	
	Since $E_n^{red}=(n-2)(2\sqrt{5}-3\sqrt{2})p_2+\sqrt{2}(5n-2)+2\sqrt{5}n$ and $(n-2)(2\sqrt{5}-3\sqrt{2})\geq 0$, $SO_{red}(HXG_n)$ reaches the maximum value when $p_2=1$ and reaches the minimum value when $p_2=0$.
	
	$E_n^{avr}=(n-2)[2\sqrt{2\bar{d}^2-10\bar{d}+13}-\sqrt{2}]p_2+2n\sqrt{2\bar{d}^2-10\bar{d}+13}+\sqrt{2}[(4-\bar{d})n+7\bar{d}-17]$ can be regarded as a linear function of $p_2$. Since $n\geq 2$,  $2\bar{d}^2-10\bar{d}+13=2(\bar{d}-\frac{5}{2})^2+\frac{1}{2}\geq \frac{1}{2}$, we have $2\sqrt{2\bar{d}^2-10\bar{d}+13}-\sqrt{2}\geq 0$. Thus $SO_{avr}(HXG_n)$ reaches the maximum value when $p_2=1$ and reaches the minimum value when $p_2=0$.	
\end{proof}

Denote by $\mathcal{HC}_{n}$ the set of all hexagonal chains with $n$ hexagons. The average value of Sombor indices among $\mathcal{HC}_{n}$ can be characterized as
$$ A_{a}(\mathcal{HC}_{n})=\frac{1}{|\mathcal{HC}_{n}|}\sum_{G\in \mathcal{HC}_{n}}SO_a(G).$$
Since each element in $\mathcal{HC}_{n}$ has the same probability of occurrence, we have $p_{1}=p_{2}=1-p_{1}-p_{2}=\frac{1}{3}$. Then we have the following theorem.

\begin{theo}
	The average values of Sombor indices among $\mathcal{HC}_{n}$ are
	$$A_a(\mathcal{HC}_{n})=\frac{4}{3}(2n-1)\sqrt{2a^2-10a+13}+\frac{2}{3}\sqrt{2}(n+7)\cdot |2-a|+\frac{\sqrt{2}}{3}(5n-7)\cdot |3-a|.$$
\end{theo}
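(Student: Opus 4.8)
The plan is to recognize that $A_a(\mathcal{HC}_n)$ is exactly the expected value $E_n^a=E[SO_a(HXG(n;p_1,p_2))]$ of Theorem \ref{hxg} under the special choice $p_1=p_2=\tfrac13$, and then merely specialize the closed formula (\ref{hxga}). As the discussion preceding the statement records, giving each hexagonal chain in $\mathcal{HC}_n$ equal probability of occurrence in the stepwise random construction forces $p_1=p_2=1-p_1-p_2=\tfrac13$, so that $\tfrac1{|\mathcal{HC}_n|}\sum_{G\in\mathcal{HC}_n}SO_a(G)=E[SO_a(HXG(n;\tfrac13,\tfrac13))]=E_n^a\big|_{p_2=1/3}$. (It is worth noting that, by Cases 1 and 3 in the proof of Theorem \ref{hxg}, the arrangements $HXG_n^1$ and $HXG_n^3$ contribute identically, which is why $E_n^a$ in (\ref{hxga}) already depends on $p_1,p_2$ through $p_2$ alone; hence only $p_2=\tfrac13$ is needed.)

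Next I would substitute $p_2=\tfrac13$ into (\ref{hxga}) and collect the coefficients of the three radical terms separately. The coefficient of $\sqrt{2a^2-10a+13}$ becomes $2\bigl(\tfrac n3-\tfrac23+n\bigr)=\tfrac83 n-\tfrac43=\tfrac43(2n-1)$; the coefficient of $\sqrt{2}\,|2-a|$ becomes $-\tfrac n3+\tfrac23+n+4=\tfrac{2n+14}{3}=\tfrac23(n+7)$; and the coefficient of $\sqrt{2}\,|3-a|$ becomes $-\tfrac n3+\tfrac23+2n-3=\tfrac{5n-7}{3}$. Assembling the three pieces gives
$$A_a(\mathcal{HC}_n)=\tfrac43(2n-1)\sqrt{2a^2-10a+13}+\tfrac23\sqrt{2}\,(n+7)\,|2-a|+\tfrac{\sqrt{2}}{3}(5n-7)\,|3-a|,$$
which is precisely the claimed identity.

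I do not expect a genuine obstacle here: the statement is a direct specialization of Theorem \ref{hxg}, so the only points needing care are (i) the combinatorial remark that the uniform average over $\mathcal{HC}_n$ corresponds to $p_1=p_2=\tfrac13$ in the random model (invoked as given in the text, resting on each chain being produced with equal weight by the stepwise process), and (ii) the elementary fraction arithmetic in the three coefficients. As a consistency check one can evaluate the resulting formula at $n=2$: the right-hand side collapses to $4\sqrt{2a^2-10a+13}+\sqrt{2}\,(6\,|2-a|+|3-a|)$, which agrees with the base value $E_2^a$ recorded at the start of the proof of Theorem \ref{hxg}.
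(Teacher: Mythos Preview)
Your proposal is correct and follows exactly the approach the paper takes: the paper itself offers no separate proof of this theorem, simply recording (in the sentence preceding it) that uniform weighting forces $p_1=p_2=\tfrac13$ and then stating the specialized formula, which is precisely what you carry out in detail. Your coefficient arithmetic and the $n=2$ sanity check are both correct.
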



\subsection{Random phenylene chains}
\hspace{1.5em}
The phenylene chains are a class of conjugated hydrocarbons consists of hexagons and squares connected in turn, which has unique physicochemical properties due to their aromatic and antiaromatic rings. In \cite{raza2020c,raza2021}, Raza studied the expected values of some indices such as sum-connectivity, harmonic, symmetric division, arithmetic bond connectivity and geometric indices in random phenylene chains. In the following, we will study the Sombor indices of phenylene chains which are special molecular graphs. A phenylene chain $RPH_{n}$ with $n$ hexagons can be regarded as a phenylene chain $RPH_{n-1}$ with $n-1$ hexagons to which a new terminal hexagon has been adjoined by two edges. For $n\geq 3$, the terminal hexagon can be attached in three ways, which results in the local arrangements, we describe as $RPH_{n}^{1}$, $RPH_{n}^{2}$, and $RPH_{n}^{3}$, respectively (see Figure \ref{g1}).


\begin{figure}[h]
	\centering
	\includegraphics[scale=0.09]{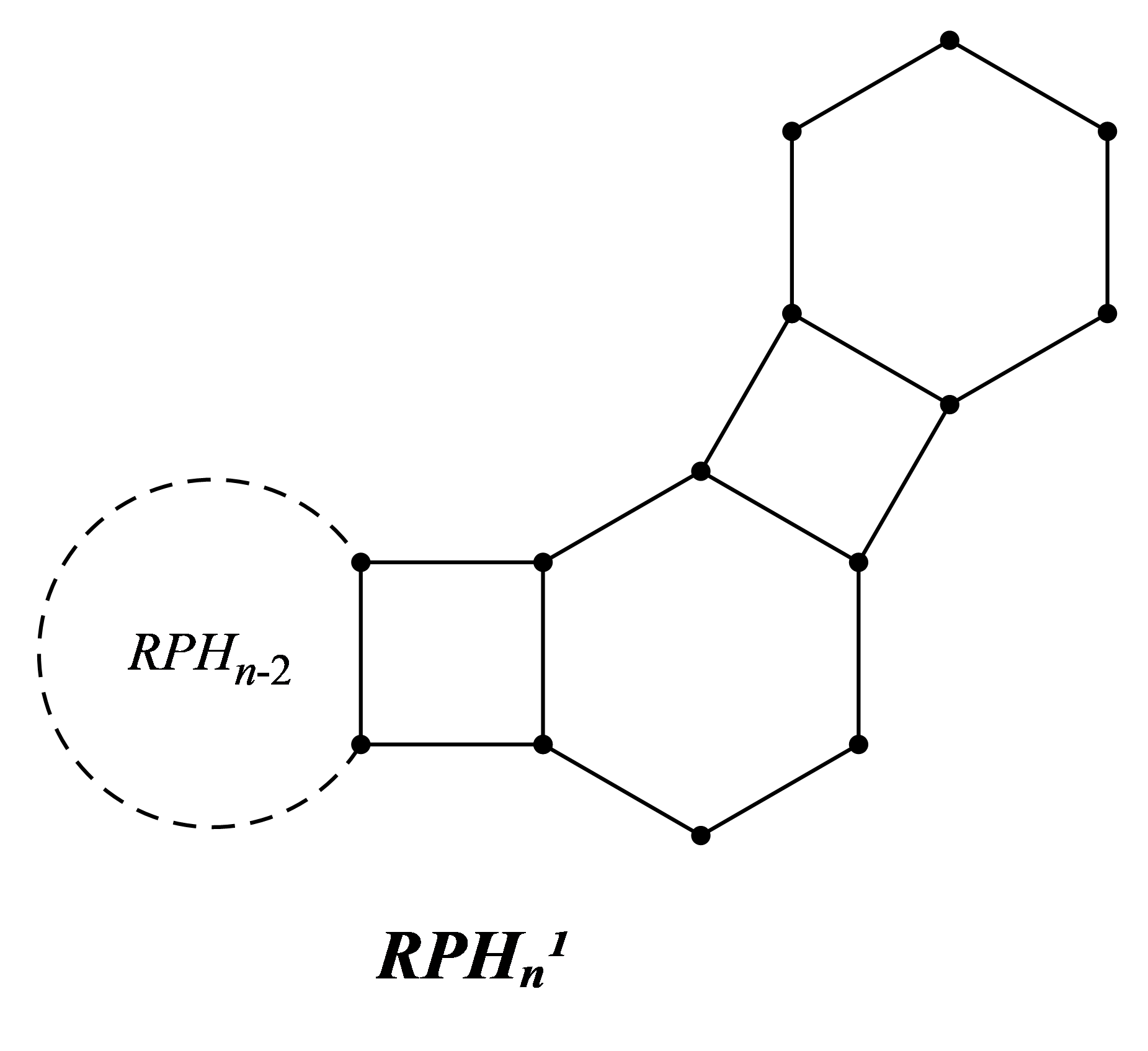}
	\qquad
	\includegraphics[scale=0.09]{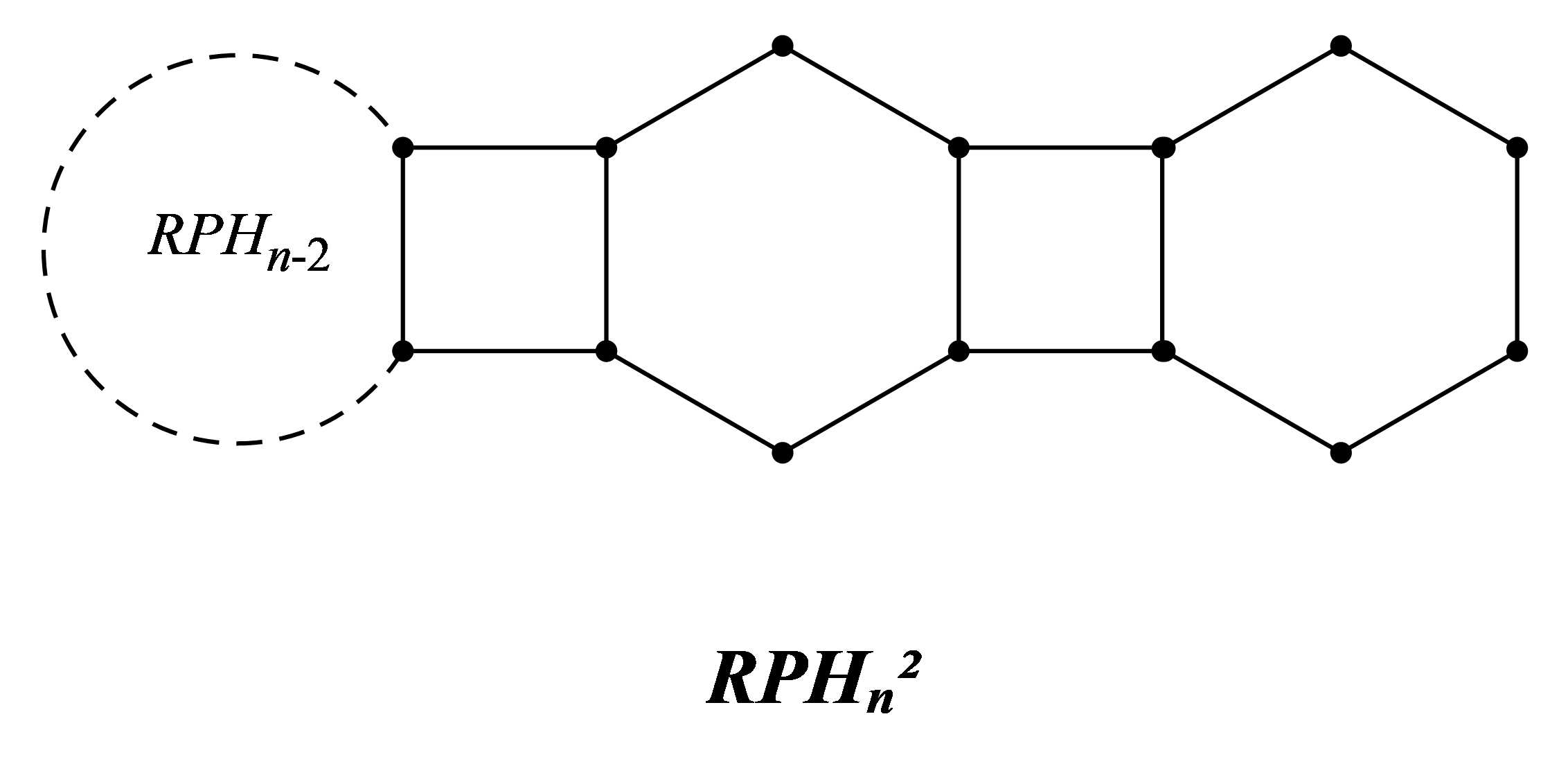}
	\qquad
	\includegraphics[scale=0.09]{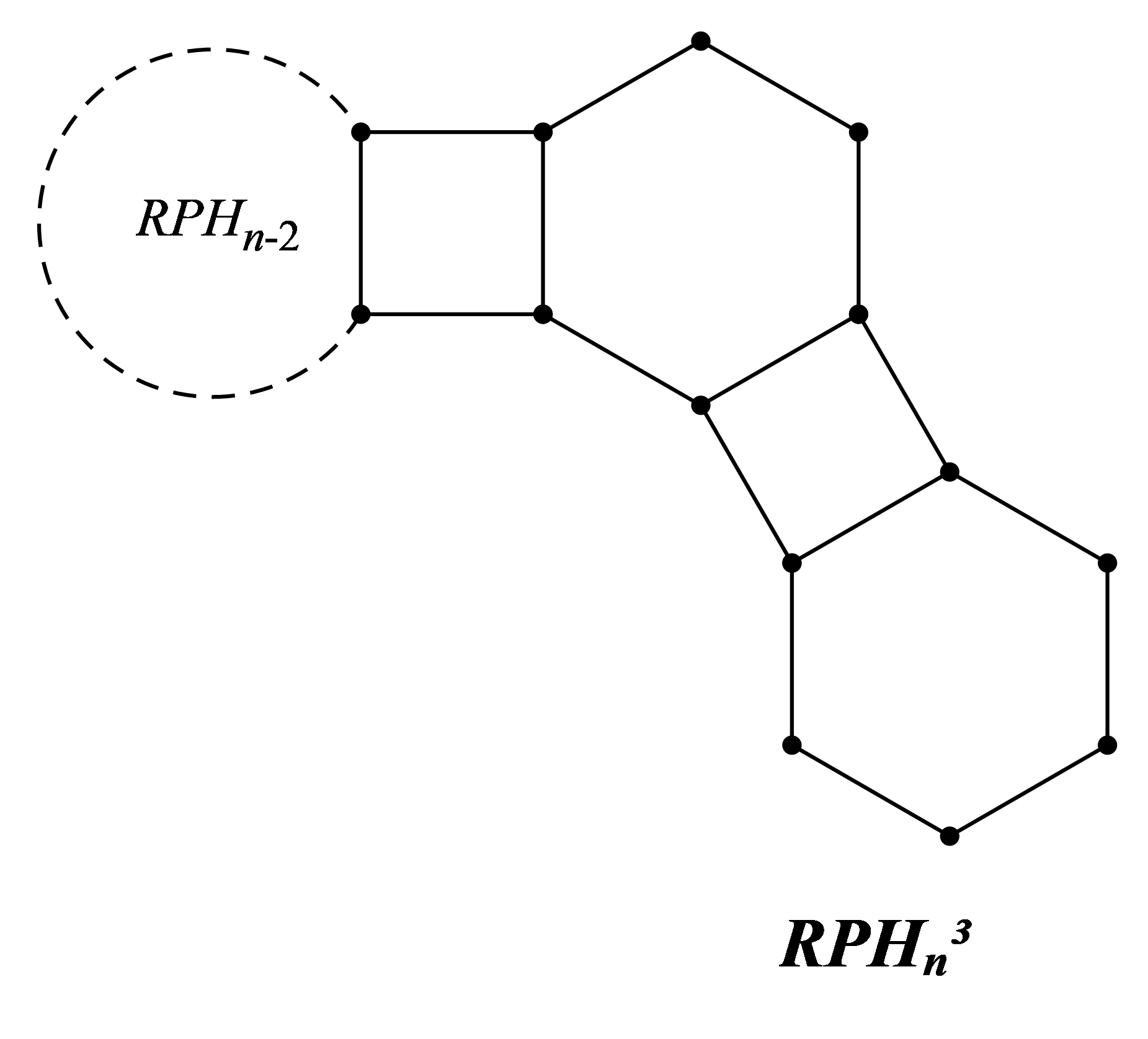}
	\caption{The three types of local arrangements in the random phenylene chains.}
	\label{g1}
\end{figure}

A random phenylene chain $RPH(n; p_{1},p_{2})$ with $n$ hexagons is a polyphenyl chain obtained by stepwise addition of terminal hexagons. At each step $t(=3,4,\cdots,n)$, a random selection is made from one of the three possible constructions: \\
$(1)$ $RPH_{t-1}\rightarrow RPH_{t}^{1}$ with probability $p_{1}$; \\
$(2)$ $RPH_{t-1}\rightarrow  RPH_{t}^{2}$ with probability $p_{2}$; \\
$(3)$ $RPH_{t-1}\rightarrow  RPH_{t}^{3}$ with probability $1-p_{1}-p_{2}$, where $p_{1}$, $p_{2}$ are constants, irrelative to the step parameter $t$.

We denote the expected values of Sombor indices by $\mathbb{E}_n^{a}=E[SO_a(RPH(n; p_1,p_2))]$, $\mathbb{E}_n=E[SO(RPH(n; p_1,p_2))]$, $\mathbb{E}_{n}^{red}=E[SO_{red}(RPH(n; p_1,p_2))]$ and $\mathbb{E}_{n}^{avr}=E[SO_{avr}(RPH(n; p_1,p_2))]$. In this section, $a$ is a constant.

\begin{theo}\label{rph}
	Let $RPH(n; p_1,p_2)$ be the random phenylene chain of length $n(\geq 2)$. Then
	\begin{equation}\label{rpha}
		\begin{split}
		\mathbb{E}_n^a=2(np_2-2p_2+n)&\sqrt{2a^2-10a+13}+\sqrt{2}(-np_2+2p_2+n+4)\cdot |2-a|\\
		&+\sqrt{2}(-np_2+2p_2+5n-6)\cdot |3-a|,
		\end{split}
	\end{equation}
	\begin{equation}\label{rphso}
		\mathbb{E}_n=[(2\sqrt{13}-5\sqrt{2})p_2+17\sqrt{2}+2\sqrt{13}]n+2(5\sqrt{2}-2\sqrt{13})p_2-10\sqrt{2},
	\end{equation}
	\begin{equation}\label{rphred}
		\mathbb{E}_{n}^{red}=[(2\sqrt{5}-3\sqrt{2})p_2+11\sqrt{2}+2\sqrt{5}]n+(6\sqrt{2}-4\sqrt{5})p_2-8\sqrt{2},
	\end{equation}
	\begin{equation}\label{rphavr}
			\begin{split}
		\mathbb{E}_n^{avr}=[2(p_2+1)\sqrt{2\bar{d}^2-10\bar{d}+13}+\sqrt{2}(13-p_2-4\bar{d})]n \\
		-4p_2 \sqrt{2\bar{d}^2-10\bar{d}+13}+2\sqrt{2}(p_2+5\bar{d}-13),\ where \ \bar{d}=\frac{8n-2}{3n}.
		\end{split}
	\end{equation}
\end{theo}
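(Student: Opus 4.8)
The plan is to follow verbatim the scheme used for Theorem \ref{hxg}. First I would observe that, by construction, every phenylene chain contains only $(2,2)$, $(2,3)$ and $(3,3)$-type edges, so Proposition \ref{main} applies and it suffices to keep track of the three edge-counts $m_{22}$, $m_{23}$, $m_{33}$ as the chain grows. For the base of the induction I would check directly on $RPH_2$ (biphenylene) that $m_{22}=6$, $m_{23}=4$, $m_{33}=4$, so that (\ref{sored}) gives $\mathbb{E}_2^a = 4\sqrt{2a^2-10a+13}+\sqrt{2}\,\bigl(6\,|2-a|+4\,|3-a|\bigr)$. I would also record that $|V(RPH(n;p_1,p_2))|=6n$ and $|E(RPH(n;p_1,p_2))|=8n-2$, so that exactly $8$ new edges are created at each step.

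Next, for $n\ge 3$ I would split into the three local attachments $RPH_n^1$, $RPH_n^2$, $RPH_n^3$ and read off from Figure \ref{g1} how adjoining the new terminal hexagon — together with the connecting square — changes the edge-type counts; here one must account both for the $6$ new hexagon edges, the $2$ new ``connecting'' edges of the square, and the re-typing of the edges already incident with the two fusion vertices of $RPH_{n-1}$, whose degrees jump from $2$ to $3$. The increments I expect are $(\Delta m_{22},\Delta m_{23},\Delta m_{33})=(1,2,5)$ for types $1$ and $3$, and $(0,4,4)$ for type $2$, each summing to $8$ new edges as it must. Feeding these into Proposition \ref{main} gives $SO_a(RPH_n^k)$ in terms of $SO_a(RPH_{n-1})$, and averaging with weights $p_1,p_2,1-p_1-p_2$ (using $E[\mathbb{E}_n^a]=\mathbb{E}_n^a$) yields the recurrence
\begin{equation*}
\mathbb{E}_n^a=\mathbb{E}_{n-1}^a+2(p_2+1)\sqrt{2a^2-10a+13}+\sqrt{2}\,\bigl[(1-p_2)\,|2-a|+(5-p_2)\,|3-a|\bigr].
\end{equation*}

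Then I would solve this first-order linear recurrence with initial value $\mathbb{E}_2^a$; it telescopes, so $\mathbb{E}_n^a=\mathbb{E}_2^a+(n-2)\bigl(\text{increment}\bigr)$, and collecting terms gives (\ref{rpha}). Substituting $a=0$ and $a=1$ then produces (\ref{rphso}) and (\ref{rphred}) after routine simplification. For (\ref{rphavr}) I would first check that $\bar d=\tfrac{2|E|}{|V|}=\tfrac{8n-2}{3n}$ satisfies $2<\bar d<3$ for all $n\ge 2$ (so $|2-\bar d|=\bar d-2$ and $|3-\bar d|=3-\bar d$), note that $\bar d$ is a constant for fixed $n$, and expand (\ref{rpha}) with $a=\bar d$.

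The only step requiring genuine care is the case analysis of the increments: correctly tracking the re-typing of the old edges at the two fusion vertices is exactly what distinguishes the three attachment patterns and what forces the answer to depend on $p_2$ but not on $p_1$. Once the three $\Delta m$ vectors are pinned down, everything else is bookkeeping identical in spirit to the proof of Theorem \ref{hxg}.
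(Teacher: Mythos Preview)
Your proposal is correct and follows essentially the same approach as the paper: the paper likewise computes the base case $\mathbb{E}_2^a$ from $(m_{22},m_{23},m_{33})=(6,4,4)$, obtains the identical increments $(1,2,5)$, $(0,4,4)$, $(1,2,5)$ for the three attachment types, derives the same recurrence, solves it, and then specializes $a=0,1,\bar d$ using $|V|=6n$, $|E|=8n-2$. Your added remarks about the total increment summing to $8$ and about why only $p_2$ survives are nice sanity checks that the paper does not make explicit.
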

\begin{proof} 
	From the structure of the phenylene chain, it is easy to see that there exists only $(2,2)$, $(2,3)$ and $(3,3)$-type of edges. From Proposition \ref{main}, when $n=2$, $\mathbb{E}_2^{a}=4\sqrt{2a^2-10a+13}+6\sqrt{2}\cdot |2-a|+4\sqrt{2}\cdot |3-a|$. For $n\geq 3$, there are three possibilities to be considered (see Figure \ref{g1}).
	
	\textbf{Case 1}. $RPH_{n-1}\rightarrow RPH_n^1$.
	$$m_{22}(RPH_n^1)=m_{22}(RPH_{n-1})+1;$$
	$$m_{23}(RPH_n^1)=m_{23}(RPH_{n-1})+2;$$
	$$m_{33}(RPH_n^1)=m_{33}(RPH_{n-1})+5.$$
	Thus, $SO_{a}(RPH_n^1)=SO_{a}(RPH_{n-1})+2\sqrt{2a^2-10a+13}+\sqrt{2}\cdot |2-a|+5\sqrt{2}\cdot |3-a|.$
	
	\textbf{Case 2}. $RPH_{n-1}\rightarrow RPH_n^2$.
	$$m_{22}(RPH_n^2)=m_{22}(RPH_{n-1})+0;$$
	$$m_{23}(RPH_n^2)=m_{23}(RPH_{n-1})+4;$$
	$$m_{33}(RPH_n^2)=m_{33}(RPH_{n-1})+4.$$
	Thus, $SO_{a}(RPH_n^2)=SO_{a}(RPH_{n-1})+4\sqrt{2a^2-10a+13}+4\sqrt{2}\cdot |3-a|.$
	
	\textbf{Case 3}. $RPH_{n-1}\rightarrow RPH_n^3$.
	$$m_{22}(RPH_n^3)=m_{22}(RPH_{n-1})+1;$$
	$$m_{23}(RPH_n^3)=m_{23}(RPH_{n-1})+2;$$
	$$m_{33}(RPH_n^3)=m_{33}(RPH_{n-1})+5.$$
	Thus, $SO_{a}(RPH_n^3)=SO_{a}(RPH_{n-1})+2\sqrt{2a^2-10a+13}+\sqrt{2}\cdot |2-a|+5\sqrt{2}\cdot |3-a|.$
	
	Therefore, $\mathbb{E}_{n}^{a}=p_1 \cdot SO_a(RPH_n^1)+p_2\cdot  SO_a(RPH_n^2)+(1-p_1-p_2)\cdot SO_a(RPH_n^3)=SO_a(RPH_{n-1})+2(p_2+1)\sqrt{2a^2-10a+13}+\sqrt{2}(1-p_2)\cdot |2-a|+\sqrt{2}(5-p_2)\cdot |3-a|$. Since $E[\mathbb{E}_n^{a}]=\mathbb{E}_n^{a}$, we have
	\begin{equation}\label{rec2}
	\mathbb{E}_n^{a}=\mathbb{E}_{n-1}^{a}+2(p_2+1)\sqrt{2a^2-10a+13}+\sqrt{2}(1-p_2)\cdot |2-a|+\sqrt{2}(5-p_2)\cdot |3-a|.
	\end{equation}
	After solving the recurrence relation (\ref{rec2}) with initial condition, we get (\ref{rpha}).
	
When $a=0$, we have (\ref{rphso}). When $a=1$, we have (\ref{rphred}). Since $|V(RPH(n; p_1, p_2))|=6n$, $|E(RPH(n; p_1,p_2))|=8n-2$, we have $2<\bar{d}=\frac{8n-2}{3n}<3$. For given $n$, $\bar{d}$ is a constant and therefore we get (\ref{rphavr}).
\end{proof}

Let $\mathbb{R}_n=RPH(n;p_1,0)$, where $p_1\in [0,1]$ and $\mathbb{P}_n=RPH(n;0,1)$. By Theorem \ref{hxg}, we have

\begin{cor}
	The Sombor indices of $\mathbb{R}_n$ and $\mathbb{P}_n$ are	

	$SO_a(\mathbb{R}_n)=2n\sqrt{2a^2-10a+13}+\sqrt{2}(n+4)\cdot |2-a|+\sqrt{2}(5n-6)\cdot |3-a|,$
	
	 $SO_a(\mathbb{P}_n)=4(n-1)\sqrt{2a^2-10a+13}+6\sqrt{2}\cdot |2-a|+4\sqrt{2}(n-1)\cdot |3-a|.$
\end{cor}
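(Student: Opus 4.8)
The plan is to mirror the proof of Theorem \ref{hxg} verbatim, replacing the hexagonal-chain increments with the phenylene-chain increments established in the three cases above. Since the statement to be proved is exactly the corollary giving $SO_a(\mathbb{R}_n)$ and $SO_a(\mathbb{P}_n)$, the whole argument is just a substitution into formula (\ref{rpha}) of Theorem \ref{rph}, so the work is entirely bookkeeping rather than anything structural.

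First I would recall that $\mathbb{R}_n = RPH(n; p_1, 0)$, i.e.\ $p_2 = 0$. Substituting $p_2 = 0$ into (\ref{rpha}) collapses the coefficient of $\sqrt{2a^2-10a+13}$ to $2n$, the coefficient of $|2-a|$ to $\sqrt{2}(n+4)$, and the coefficient of $|3-a|$ to $\sqrt{2}(5n-6)$, which is exactly the claimed expression for $SO_a(\mathbb{R}_n)$; note that since $p_2=0$ is fixed, the random chain is in fact deterministic in its multiset of edge types (independent of $p_1$), so the expected value equals the actual value, justifying writing $SO_a$ rather than an expectation. Next, for $\mathbb{P}_n = RPH(n; 0, 1)$, I would set $p_2 = 1$ in (\ref{rpha}): the coefficient of $\sqrt{2a^2-10a+13}$ becomes $2(n - 2 + n) = 4n - 4 = 4(n-1)$, the coefficient of $|2-a|$ becomes $\sqrt{2}(-n + 2 + n + 4) = 6\sqrt{2}$, and the coefficient of $|3-a|$ becomes $\sqrt{2}(-n + 2 + 5n - 6) = \sqrt{2}(4n - 4) = 4\sqrt{2}(n-1)$, matching the claim.

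There is no real obstacle here; the only thing to be careful about is the off-hand reference in the corollary statement ("By Theorem \ref{hxg}") which should really point to Theorem \ref{rph}, and one should double-check that the $n=2$ base case from the proof of Theorem \ref{rph} ($\mathbb{E}_2^a = 4\sqrt{2a^2-10a+13} + 6\sqrt{2}\cdot|2-a| + 4\sqrt{2}\cdot|3-a|$) agrees with both specializations at $n=2$: for $\mathbb{R}_2$ one gets $4\sqrt{2a^2-10a+13} + 6\sqrt{2}|2-a| + 4\sqrt{2}|3-a|$, and for $\mathbb{P}_2$ one gets $4\sqrt{2a^2-10a+13} + 6\sqrt{2}|2-a| + 4\sqrt{2}|3-a|$, both consistent. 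Hence the proof is simply: apply Theorem \ref{rph} with $p_2 = 0$ and with $p_2 = 1$ respectively, and simplify the coefficients.
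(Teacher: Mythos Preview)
Your proposal is correct and follows exactly the paper's approach: the corollary is obtained by substituting $p_2=0$ (for $\mathbb{R}_n$) and $p_2=1$ (for $\mathbb{P}_n$) into formula (\ref{rpha}) of Theorem \ref{rph}. You are also right that the paper's reference to Theorem \ref{hxg} is a typo for Theorem \ref{rph}, and your observation that when $p_2$ is fixed at $0$ or $1$ the edge-type counts are deterministic (so the expected value equals the actual value) is a valid point the paper leaves implicit.
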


\begin{cor}
	Among all random phenylene chains $RPH_{n} (n\geq 2)$, we have\\
(1) $ (17\sqrt{2}+2\sqrt{13})n-10\sqrt{2}  \leq SO(RPH_{n})\leq (12\sqrt{2}+4\sqrt{13})n-4\sqrt{13},$ with left equality iff $G\cong \mathbb{R}_n$, right equality iff $G\cong \mathbb{P}_n$.	\\
(2) $ (11\sqrt{2}+2\sqrt{5})n-8\sqrt{2} \leq SO_{red}(RPH_{n})\leq (8\sqrt{2}+4\sqrt{5})n-2\sqrt{2}-4\sqrt{5},$ with left equality iff $G\cong \mathbb{R}_n$, right equality iff $G\cong \mathbb{P}_n$.	\\
(3)	$2n\sqrt{2\bar{d}^2-10\bar{d}+13}+\sqrt{2}[(13-4\bar{d})n+2(5\bar{d}-13)] \leq SO_{avr}(RPH_{n}) \leq  4(n-1)\sqrt{2\bar{d}^2-10\bar{d}+13}+2\sqrt{2}[2n(3-\bar{d})+5\bar{d}-12], \ where \ \bar{d}=\frac{8n-2}{3n},$ with left equality iff $G\cong \mathbb{R}_n$, right equality iff $G\cong \mathbb{P}_n$.	
\end{cor}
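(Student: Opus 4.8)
The plan is to deduce all three inequalities straight from Theorem \ref{rph}, using the fact that each expected value displayed there is an affine function of $p_2$ alone (the parameter $p_1$ never enters). First I would note that $\mathbb{R}_n=RPH(n;p_1,0)$ and $\mathbb{P}_n=RPH(n;0,1)$ are \emph{deterministic} chains, so for them the random variable $SO_a$ equals its expectation; hence $SO_a(\mathbb{R}_n)$ and $SO_a(\mathbb{P}_n)$ are obtained by evaluating the relevant formula at $p_2=0$ and $p_2=1$ respectively. Since an affine function on $[0,1]$ attains its extrema at the endpoints, it then suffices, for each of $a=0,1,\bar d$, to rewrite the corresponding formula (\ref{rphso}), (\ref{rphred}), (\ref{rphavr}) in the form $\mathbb{E}_n^{\,\cdot}= (\text{term free of }p_2) + (\text{slope})\cdot p_2$ and to check that the slope is nonnegative for $n\ge 2$; the lower bound is then the $p_2=0$ value (attained at $\mathbb{R}_n$) and the upper bound the $p_2=1$ value (attained at $\mathbb{P}_n$).

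For part (1), collecting the $p_2$-terms in (\ref{rphso}) gives $\mathbb{E}_n=(17\sqrt2+2\sqrt{13})n-10\sqrt2+(2\sqrt{13}-5\sqrt2)(n-2)\,p_2$. Because $2\sqrt{13}=\sqrt{52}>\sqrt{50}=5\sqrt2$ and $n\ge2$, the slope $(2\sqrt{13}-5\sqrt2)(n-2)$ is nonnegative; substituting $p_2=0$ and $p_2=1$ and simplifying yields exactly the two bounds claimed in (1). Part (2) is handled the same way: (\ref{rphred}) rearranges to $\mathbb{E}_n^{red}=(11\sqrt2+2\sqrt5)n-8\sqrt2+(2\sqrt5-3\sqrt2)(n-2)\,p_2$, and $2\sqrt5=\sqrt{20}>\sqrt{18}=3\sqrt2$ makes the slope nonnegative, so again the extrema occur at $p_2=0$ and $p_2=1$, giving the bounds in (2).

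For part (3) I would abbreviate $K=\sqrt{2\bar d^2-10\bar d+13}$, which is a constant once $n$ (hence $\bar d=\frac{8n-2}{3n}$) is fixed, and rewrite (\ref{rphavr}) as $\mathbb{E}_n^{avr}=2nK+\sqrt2\bigl[(13-4\bar d)n+2(5\bar d-13)\bigr]+(2K-\sqrt2)(n-2)\,p_2$. The only point needing an argument is the sign of $2K-\sqrt2$; this follows from completing the square, $2\bar d^2-10\bar d+13=2(\bar d-\frac52)^2+\frac12\ge\frac12$, so $K\ge\frac{1}{\sqrt2}$ and $2K-\sqrt2\ge0$. Thus the same endpoint analysis applies, and evaluating at $p_2=0$ and $p_2=1$ (using $2(n-1)=2n-2$ in the algebra) reproduces the bounds in (3).

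The computations are entirely routine; the only genuine content is the three slope-sign checks $2\sqrt{13}\ge5\sqrt2$, $2\sqrt5\ge3\sqrt2$ and $2K\ge\sqrt2$, so the "main obstacle" is merely keeping the bookkeeping straight when expanding the endpoint values. I would also remark that when $n=2$ all three slopes vanish and $\mathbb{R}_2=\mathbb{P}_2$, so the upper and lower bounds coincide — consistent with there being a unique phenylene chain on two hexagons — which is why the statement is phrased for $n\ge2$ with equality conditions that collapse in that degenerate case.
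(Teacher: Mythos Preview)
Your proof is correct and follows essentially the same route as the paper: rewrite each of $\mathbb{E}_n$, $\mathbb{E}_n^{red}$, $\mathbb{E}_n^{avr}$ as an affine function of $p_2$ with slope $(n-2)$ times, respectively, $2\sqrt{13}-5\sqrt2$, $2\sqrt5-3\sqrt2$, and $2\sqrt{2\bar d^2-10\bar d+13}-\sqrt2$, verify each slope is nonnegative (for the last one via the completed square $2(\bar d-\tfrac52)^2+\tfrac12\ge\tfrac12$), and read off the extrema at $p_2=0,1$. Your additional remarks on why $SO_a(\mathbb{R}_n)$ and $SO_a(\mathbb{P}_n)$ coincide with the expectations, and on the $n=2$ degeneracy, are useful clarifications the paper leaves implicit.
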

\begin{proof}
	Since $\mathbb{E}_n=(n-2)(2\sqrt{13}-5\sqrt{2})p_2+\sqrt{2}(17n-10)+2\sqrt{13}n$ and $(n-2)(2\sqrt{13}-5\sqrt{2})\geq 0$, $SO(RPH_n)$ reaches the maximum value when $p_2=1$ and reaches the minimum value when $p_2=0$.
	
	Since $\mathbb{E}_{n}^{red}=(n-2)(2\sqrt{5}-3\sqrt{2})p_2+\sqrt{2}(11n-8)+2\sqrt{5}n$ and $(n-2)(2\sqrt{5}-3\sqrt{2})\geq 0$, $SO_{red}(RPH_n)$ reaches the maximum value when $p_2=1$ and reaches the minimum value when $p_2=0$.
	
	$\mathbb{E}_n^{avr}=(n-2)[2\sqrt{2\bar{d}^2-10\bar{d}+13}-\sqrt{2}]p_2+2n\sqrt{2\bar{d}^2-10\bar{d}+13}+\sqrt{2}[(13-4\bar{d})n+2(5\bar{d}-13)]$ can be regarded as a linear function of $p_2$. Since $n\geq 2$,  $2\bar{d}^2-10\bar{d}+13=2(\bar{d}-\frac{5}{2})^2+\frac{1}{2}\geq \frac{1}{2}$, we have $2\sqrt{2\bar{d}^2-10\bar{d}+13}-\sqrt{2}\geq 0$. Thus $SO_{avr}(RPH_n)$ reaches the maximum value when $p_2=1$ and reaches the minimum value when $p_2=0$.	
\end{proof}

Denote by $\mathcal{PC}_{n}$ the set of all phenylene chains with $n$ hexagons. The average value of Sombor indices among $\mathcal{PC}_{n}$ can be characterized as
$$ A_a(\mathcal{PC}_{n})=\frac{1}{|\mathcal{PC}_{n}|}\sum_{G\in \mathcal{PC}_{n}}SO_a(G).$$
Since each element in $\mathcal{PC}_{n}$ has the same probability of occurrence, we have $p_{1}=p_{2}=1-p_{1}-p_{2}=\frac{1}{3}$. Then we have the following theorem.

\begin{theo}
	The average values of Sombor indices among $\mathcal{PC}_{n}$ are
	$$A_a(\mathcal{PC}_{n})=\frac{4}{3}(2n-1)\sqrt{2a^2-10a+13}+\frac{2}{3}\sqrt{2}(n+7)\cdot |2-a|+\frac{2}{3}\sqrt{2}(7n-8)\cdot |3-a|.$$
\end{theo}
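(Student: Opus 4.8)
The plan is to recognize the asserted identity as the specialization of Theorem~\ref{rph} to the uniform model. As observed in the sentence preceding the statement, choosing a phenylene chain of $n$ hexagons uniformly at random from $\mathcal{PC}_n$ amounts to running the random process $RPH(n;p_1,p_2)$ with $p_1=p_2=1-p_1-p_2=\frac{1}{3}$; hence $A_a(\mathcal{PC}_n)=E[SO_a(RPH(n;\frac{1}{3},\frac{1}{3}))]$, which by definition is $\mathbb{E}_n^a$ with $p_1=p_2=\frac{1}{3}$. Since the closed form (\ref{rpha}) (and already the recurrence (\ref{rec2})) depends on $p_1,p_2$ only through $p_2$, the whole computation reduces to substituting $p_2=\frac{1}{3}$ into (\ref{rpha}) and simplifying its three coefficients.

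Carrying this out: the coefficient of $\sqrt{2a^2-10a+13}$ becomes $2\left(\frac{n}{3}-\frac{2}{3}+n\right)=\frac{8n-4}{3}=\frac{4}{3}(2n-1)$; the coefficient of $\sqrt{2}\,|2-a|$ becomes $-\frac{n}{3}+\frac{2}{3}+n+4=\frac{2n+14}{3}=\frac{2}{3}(n+7)$; and the coefficient of $\sqrt{2}\,|3-a|$ becomes $-\frac{n}{3}+\frac{2}{3}+5n-6=\frac{14n-16}{3}=\frac{2}{3}(7n-8)$. Reassembling the three terms gives exactly
$$A_a(\mathcal{PC}_n)=\frac{4}{3}(2n-1)\sqrt{2a^2-10a+13}+\frac{2}{3}\sqrt{2}\,(n+7)\,|2-a|+\frac{2}{3}\sqrt{2}\,(7n-8)\,|3-a|,$$
which is the claimed formula.

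There is no real analytic obstacle: once Theorem~\ref{rph} is in hand, this is a one-line substitution followed by elementary arithmetic. The only point deserving a word of care is the first reduction --- that assigning probability $\frac{1}{3}$ to each of the three local attachments at every step genuinely yields the uniform distribution on $\mathcal{PC}_n$ (equivalently, that the stepwise construction realizes each member of $\mathcal{PC}_n$ with equal total weight). This is the standard fact for catacondensed chain models and is exactly what is stated just before the theorem, so I would simply invoke it rather than reprove it. If a self-contained route were wanted, one could instead introduce $S_n=\sum_{G\in\mathcal{PC}_n}SO_a(G)$ and repeat the Case~1--3 edge-count bookkeeping from the proof of Theorem~\ref{rph} to obtain a recurrence for $S_n$ alongside $|\mathcal{PC}_n|$; but this only re-derives the $p_2=\frac{1}{3}$ instance of (\ref{rpha}) and adds nothing.
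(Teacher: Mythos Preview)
Your proposal is correct and follows exactly the paper's approach: the paper gives no separate proof for this theorem but simply notes, in the sentence preceding it, that the uniform model corresponds to $p_1=p_2=\tfrac{1}{3}$, so the result is the specialization of (\ref{rpha}) to $p_2=\tfrac{1}{3}$. Your write-up is in fact more explicit than the paper's, since you carry out the arithmetic on each of the three coefficients and flag the one nontrivial identification (uniform distribution $\leftrightarrow$ equal step probabilities) that the paper asserts without argument.
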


\subsection{Comparisons between Sombor indices with respect to random hexagonal chains and random phenylene chains}
\hspace{1.5em}
With the help of Theorems \ref{hxg} and \ref{rph}, we make a comparison between the expected values for Sombor index, reduced Sombor index and average Sombor index of a random hexagonal chain or a random phenylene chain with the same probabilities $p_i \ (i=1,2)$ (see Figure \ref{compar1}, \ref{compar2}).

\begin{theo}
 Let $HXG(n; p_1,p_2)$ be the hexagonal chain of length $n(\geq 2)$ and $RPH(n; p_1,p_2)$ be the random phenylene chain of length $n(\geq 2)$. Then
 $$E[SO(G)]>E[SO_{red}(G)]>E[SO_{avr}(G)], \mbox{ where } G\cong HXG(n; p_1,p_2) \mbox{ or } RPH(n; p_1,p_2),$$
 $$E[SO(RPH(n; p_1,p_2))]>E[SO(HXG(n; p_1,p_2))],$$
 $$E[SO_{red}(RPH(n; p_1,p_2))]>E[SO_{red}(HXG(n; p_1,p_2))],$$
 $$E[SO_{avr}(RPH(n; p_1,p_2))]>E[SO_{avr}(HXG(n; p_1,p_2))].$$
\end{theo}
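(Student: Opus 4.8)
The plan is to run everything off the closed forms (\ref{hxga}) and (\ref{rpha}) of Theorems \ref{hxg} and \ref{rph}. The structural fact I would start from is that in both formulas the quantities $\sqrt{2a^2-10a+13}$, $\sqrt2\,|2-a|$ and $\sqrt2\,|3-a|$ carry coefficients depending only on $n$ and $p_2$, not on $a$; writing the coefficient of $\sqrt{2a^2-10a+13}$ as $C_1=2((n-2)p_2+n)$ and those of $\sqrt2\,|2-a|$, $\sqrt2\,|3-a|$ as $\sqrt2\,C_2$, $\sqrt2\,C_3^{\bullet}$ with $C_2=-(n-2)p_2+n+4$, $C_3^{H}=-(n-2)p_2+2n-3$ for the hexagonal chain and $C_3^{R}=-(n-2)p_2+5n-6$ for the phenylene chain, I would first record that for $n\ge2$ and $0\le p_2\le1$ all of these are strictly positive ($C_1\ge2n$, $C_2\ge6$, $C_3^{H}\ge n-1$, $C_3^{R}\ge4n-4$) and that $C_3^{R}=C_3^{H}+3(n-1)$.

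For the chain $E[SO(G)]>E[SO_{red}(G)]>E[SO_{avr}(G)]$ I would fix one of the two families and observe that its three indices are obtained from the \emph{same} expression by putting $a=0$, $a=1$, $a=\bar d$, with $2<\bar d<3$ (this range is established inside the proofs of Theorems \ref{hxg} and \ref{rph}). Since $C_1,C_2,C_3$ are positive and $a$-free, the claim reduces to the scalar comparisons $\sqrt{13}>\sqrt5>\sqrt{2\bar d^2-10\bar d+13}$, $2>1>|2-\bar d|$ and $3>2>|3-\bar d|$: the first of these amounts to $2\bar d^2-10\bar d+13<5$, i.e. $(\bar d-1)(\bar d-4)<0$, true for $1<\bar d<4$; the other two are immediate from $2<\bar d<3$. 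Multiplying by the (strictly positive) coefficients and summing gives the strict chain, for each family.

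For $E[SO(RPH)]>E[SO(HXG)]$ and $E[SO_{red}(RPH)]>E[SO_{red}(HXG)]$ the value of $a$ ($0$, resp. $1$) is common to both families, so on subtracting the formulas the $C_1$- and $C_2$-parts cancel and I am left with $\mathbb E_n^{a}-E_n^{a}=\sqrt2\,(C_3^{R}-C_3^{H})\,|3-a|=3\sqrt2(n-1)\,|3-a|$, which is $9\sqrt2(n-1)$ at $a=0$ and $6\sqrt2(n-1)$ at $a=1$, hence positive for $n\ge2$; this is the same as subtracting (\ref{hxgso}) from (\ref{rphso}) and (\ref{hxgred}) from (\ref{rphred}).

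The remaining inequality $E[SO_{avr}(RPH)]>E[SO_{avr}(HXG)]$ is the one I expect to be the main obstacle, because here the average degrees differ: $\bar d_H=\frac{5n+1}{2n+1}\in(2,\tfrac52)$ for the hexagonal chain but $\bar d_R=\frac{8n-2}{3n}\in[\tfrac73,\tfrac83)$ for the phenylene chain, so the termwise argument is unavailable. Writing $\gamma_{\bullet}=\sqrt{2\bar d_{\bullet}^2-10\bar d_{\bullet}+13}$, I would substitute $a=\bar d_R$ in (\ref{rpha}) and $a=\bar d_H$ in (\ref{hxga}) and, using $C_3^{R}=C_3^{H}+3(n-1)$ together with $\bar d_R-\bar d_H=\frac{(n+2)(n-1)}{3n(2n+1)}$ and $3-\bar d_R=\frac{n+2}{3n}$, simplify the difference to
\begin{equation*}
\mathbb E_n^{avr}-E_n^{avr}=C_1(\gamma_R-\gamma_H)+\sqrt2\cdot\frac{5(n-1)(n+2)^2}{3n(2n+1)}.
\end{equation*}
The second summand is a strictly positive rational multiple of $\sqrt2$ and is of order $n$. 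The whole expression is affine in $p_2$ with $C_1$ nondecreasing in $p_2$, so when $\gamma_R\ge\gamma_H$ positivity is immediate, and otherwise it suffices to bound the $p_2=1$ case, where $C_1=4(n-1)$. There I would estimate
\begin{equation*}
|\gamma_R-\gamma_H|=\frac{|\gamma_R^2-\gamma_H^2|}{\gamma_R+\gamma_H}=\frac{2(\bar d_R-\bar d_H)\,|\bar d_R+\bar d_H-5|}{\gamma_R+\gamma_H}<\frac{2(\bar d_R-\bar d_H)\cdot\tfrac23}{\sqrt2}=\frac{2\sqrt2}{3}(\bar d_R-\bar d_H),
\end{equation*}
using $\gamma_R+\gamma_H\ge\sqrt2$ (each $\gamma_{\bullet}\ge\sqrt{1/2}$, the minimum value of $2x^2-10x+13$) and the crude bound $|\bar d_R+\bar d_H-5|<\tfrac23$. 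Then $C_1|\gamma_R-\gamma_H|\le\frac{8\sqrt2(n-1)^2(n+2)}{9n(2n+1)}$, and the required inequality collapses to $\frac{5(n+2)}{3}>\frac{8(n-1)}{9}$, i.e. $15(n+2)>8(n-1)$, which always holds. The small auxiliary fact $\bar d_R>\bar d_H$ for $n\ge2$ used above follows from the one-line identity $(8n-2)(2n+1)-3n(5n+1)=(n+2)(n-1)$. The bulk of the work is therefore concentrated in the simplification producing the clean term $\frac{5(n-1)(n+2)^2}{3n(2n+1)}$ and in the square-root estimate for $|\gamma_R-\gamma_H|$.
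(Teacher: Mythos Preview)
Your proof is correct and organized around the same closed forms, but it diverges from the paper in two places.

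For the chain $E[SO]>E[SO_{red}]>E[SO_{avr}]$, the paper argues at the edge level: since every edge has endpoint degrees in $\{2,3\}$ and $2<\bar d<3$, each summand satisfies $\sqrt{d_i^2+d_j^2}>\sqrt{(d_i-1)^2+(d_j-1)^2}>\sqrt{(d_i-\bar d)^2+(d_j-\bar d)^2}$, and summing gives the inequality outright. Your version---comparing the three scalar weights against the common positive coefficients $C_1,C_2,C_3$---is equivalent but one step removed; the paper's per-edge argument is the shorter route here. The two middle inequalities ($SO$ and $SO_{red}$ between the families) are handled identically in both arguments, via the same differences $9\sqrt2(n-1)$ and $6\sqrt2(n-1)$.

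For the last inequality $E[SO_{avr}(RPH)]>E[SO_{avr}(HXG)]$, your approach is genuinely different and, in fact, cleaner. The paper checks $n=2$ by hand and then invokes the recurrences (\ref{re2}) and (\ref{rec2}) to bound the increment $(\mathbb E_n^{avr}-\mathbb E_{n-1}^{avr})-(E_n^{avr}-E_{n-1}^{avr})$, using the numerical ranges $\bar d_1\in[\tfrac{11}{5},\tfrac52)$, $\bar d_2\in[\tfrac73,\tfrac83)$ together with $f(\bar d_2)-f(\bar d_1)\ge\sqrt2-\tfrac25\sqrt{17}$. That inductive step is somewhat delicate, and one must be careful because those recurrences are written for a \emph{fixed} parameter $a$, whereas $\bar d$ actually drifts with $n$. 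Your direct computation avoids this issue entirely: the exact identity
\[
\mathbb E_n^{avr}-E_n^{avr}=C_1(\gamma_R-\gamma_H)+\sqrt2\cdot\frac{5(n-1)(n+2)^2}{3n(2n+1)}
\]
(which I verified) combined with the conjugate bound $|\gamma_R-\gamma_H|\le\tfrac{2\sqrt2}{3}(\bar d_R-\bar d_H)$ reduces everything to the triviality $15(n+2)>8(n-1)$. Your route needs neither induction nor a base case, and it makes transparent which term supplies the positive margin.
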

\begin{proof}
	Since $2 \leq d_i, d_j\leq 3$, $2<\bar{d}<3$, we have $$\sqrt{d_i^2+d_j^2}>\sqrt{(d_i-1)^2+(d_j-1)^2}>\sqrt{(d_i-\bar{d})^2+(d_j-\bar{d})^2},$$ thus $E[SO(G)]>E[SO_{red}(G)]>E[SO_{avr}(G)]$.
	
	Since 
	$$E[SO(RPH(n; p_1,p_2))]-E[SO(HXG(n; p_1,p_2))]=9\sqrt{2}(n-1)>0,$$ $$E[SO_{red}(RPH(n; p_1,p_2))]-E[SO_{red}(HXG(n; p_1,p_2))]=6\sqrt{2}(n-1)>0,$$
	we have 
	$$E[SO(RPH(n; p_1,p_2))]>E[SO(HXG(n; p_1,p_2))],$$
	$$E[SO_{red}(RPH(n; p_1,p_2))]>E[SO_{red}(HXG(n; p_1,p_2))].$$ 
	
	When $n=2$, from Theorem \ref{hxg} and Theorem \ref{rph}, we have $E[SO_{avr}(RPH(2; p_1,p_2))]>E[SO_{avr}(HXG(2; p_1,p_2))]$. Let $\bar{d}_1=\bar{d}(HXG(n; p_1,p_2))$ and $\bar{d}_2=\bar{d}(RPH(n; p_1,p_2)).$ Since 
	$$\frac{11}{5}\leq \bar{d}_1=\frac{5n+1}{2n+1}<\frac{5}{2},\quad \frac{7}{3}\leq \bar{d}_2=\frac{8n-2}{3n}<\frac{8}{3},$$ 
	we have $\bar{d}_1-4\bar{d}_2\geq \frac{11}{5}-4 \cdot \frac{8}{3}=-\frac{127}{15}$.
	Let $f(a)=2\sqrt{2a^2-10a+13}$, then $f(\bar{d}_2)-f(\bar{d}_1)\geq f(\frac{5}{2})-f(\frac{11}{5})=\sqrt{2}-\frac{2}{5}\sqrt{17}$. By (\ref{re2}) and (\ref{rec2}), 
	\begin{equation*}
		\begin{split}
	&\mathbb{E}_n^{avr}-\mathbb{E}_{n-1}^{avr}-(E_n^{avr}-E_{n-1}^{avr})\\
	=&(p_2+1) \left( 2\sqrt{2\bar{d}_2^2-10\bar{d}_2+13}-2\sqrt{2\bar{d}_1^2-10\bar{d}_1+13} \right)+\sqrt{2}(9+\bar{d}_1-4\bar{d}_2)\\
	\geq & (p_2+1)\cdot (\sqrt{2}-\frac{2}{5}\sqrt{17})+\sqrt{2} \cdot (9-\frac{127}{15})\\
	\geq & 2(\sqrt{2}-\frac{2}{5}\sqrt{17})+\sqrt{2} \cdot (9-\frac{127}{15})\\
	=& (11-\frac{127}{15})\sqrt{2}-\frac{7}{5}\sqrt{17}>0.
	\end{split}
	\end{equation*}	
		
Therefore, $E[SO_{avr}(RPH(n; p_1,p_2))]>E[SO_{avr}(HXG(n; p_1,p_2))].$
\end{proof}

\begin{figure}[h]
	\centering
			\begin{minipage}{220pt}
				\centering
				\includegraphics[scale=0.5]{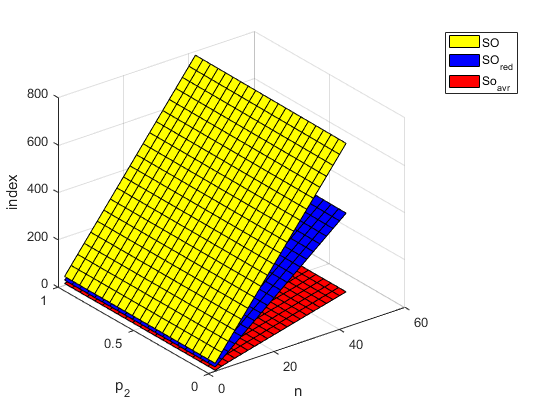}
				\caption{Difference between $SO(HXG_{n})$, $SO_{red}(HXG_{n})$ and $SO_{avr}(HXG_{n})$}
				\label{compar1}
		\end{minipage}
		\qquad
		\begin{minipage}{225pt}
			\centering
			\includegraphics[scale=0.5]{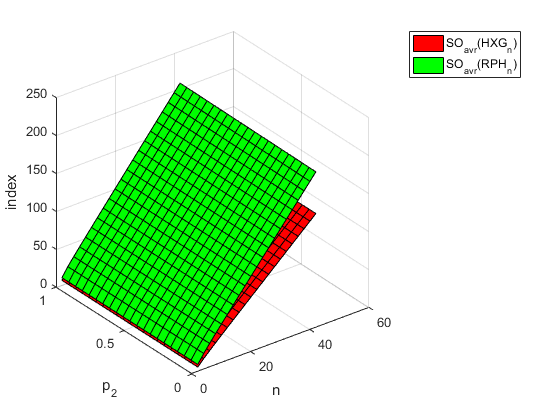}
			\caption{Difference between $SO_{avr}(HXG_{n})$ and $SO_{avr}(RPH_{n})$}
			\label{compar2}
		\end{minipage}
\end{figure}

\section{The Sombor indices of graphene, coronoid systems and carbon nanocones}\label{sec3}
\hspace{1.5em}
Topological indices are important graph invariants used for describing various properties of molecules. Methods for computing topological indices of some molecular graphs such as benzenoid systems, phenylenes or coronoid systems were studied in \cite{bret2021,tra2019,zig2019}. In this section, we study the Sombor indices of graphene, coronoid systems and carbon nanocones.

Graphene \cite{boes1986,novg2004}, denoted by $GN(n,k)$, is a flat monolayer of carbon atoms tightly packed into a two-dimensional hexagonal lattice that forms a basic building block for graphitic materials of different forms (see Figure \ref{n1}). Due to the C-C covalent bonds, graphene is the hardest material known in nature \cite{gein2007}. There are various results about the topological indices of graphene in recent years \cite{aroc2016,aroc2020}.

\begin{figure}[h]
	\centering
	\scalebox{.08}[.08]{\includegraphics{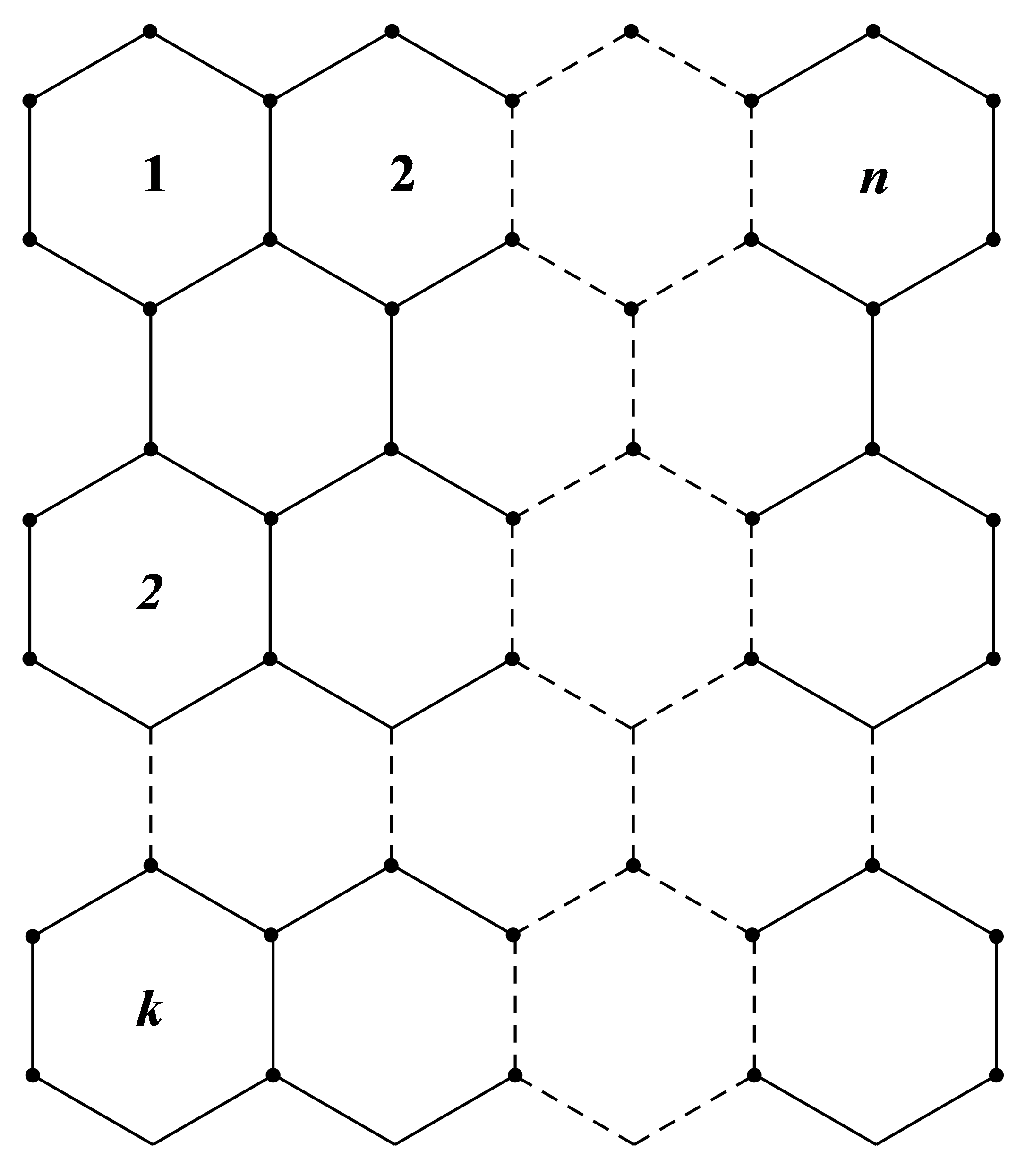}}
	\caption{Structure of graphene $GN(n,k)$.}
	\label{n1}
\end{figure}
\begin{theo}
	Let $G$ be the graphene $GN(n,k)$, $1\leq k\leq n$. Then
	$$SO(G)=4\sqrt{13}(n+k-2)+\sqrt{2}(18nk-15n-11k+20),$$
	$$SO_{red}(G)=4\sqrt{5}(n+k-2)+\sqrt{2}(12nk-8k-10n+12),$$
	\begin{align*}
	SO_{avr}&(G)=\frac{4(k+n-2)}{k(2n+1)}\sqrt{4k^2n^2-4k^2n+5k^2-4kn^2+6kn+2n^2}\\
	&+\frac{\sqrt{2}(16k^2n-12k^2+6kn^2-9kn+4k-5n^2)}{k(2n+1)}.
	\end{align*}
\end{theo}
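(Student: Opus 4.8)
The plan is to reduce the statement to Proposition~\ref{main} by identifying the edge partition of $GN(n,k)$. First I would note that every vertex of the hexagonal lattice patch $GN(n,k)$ has degree $2$ or $3$: interior vertices are all of degree $3$, while the degree-$2$ vertices are precisely those on the outer boundary that belong to only one hexagon. Hence only edges of type $(2,2)$, $(2,3)$ and $(3,3)$ occur, and Proposition~\ref{main} applies directly once $m_{22}$, $m_{23}$ and $m_{33}$ are known.

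The combinatorial heart of the argument is to count these three quantities from the structure in Figure~\ref{n1}. I would do this by walking along the boundary cycle of the region, treating the four corner pieces separately from the straight sides, and counting the interior $(3,3)$-edges row by row (or by subtracting the boundary contribution from $|E|$). The count should give
\[
m_{22}(GN(n,k))=2k+4,\qquad m_{23}(GN(n,k))=4(n+k-2),\qquad m_{33}(GN(n,k))=6nk-5n-5k+4.
\]
As an internal check, the handshake relations $2n_2=2m_{22}+m_{23}$ and $3n_3=2m_{33}+m_{23}$ for the numbers $n_2,n_3$ of degree-$2$ and degree-$3$ vertices give $|V(GN(n,k))|=n_2+n_3=2k(2n+1)$ and $|E(GN(n,k))|=m_{22}+m_{23}+m_{33}=6nk-n+k$, and one verifies consistency via $\sum_{ij}m_{ij}=|E|$.

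With the edge partition established, the formulas for $SO(G)$ and $SO_{red}(G)$ follow at once by setting $a=0$ and $a=1$ in~(\ref{sored}). For the average Sombor index I would first compute
\[
\bar d=\frac{2|E(GN(n,k))|}{|V(GN(n,k))|}=\frac{6nk-n+k}{k(2n+1)},
\]
observe that $2\le\bar d<3$ (since $2nk\ge n+k$ and $-n<2k$), so that $|2-\bar d|=\dfrac{2nk-n-k}{k(2n+1)}$ and $|3-\bar d|=\dfrac{n+2k}{k(2n+1)}$, and record the identity
\[
2\bar d^{2}-10\bar d+13=\frac{4k^{2}n^{2}-4k^{2}n+5k^{2}-4kn^{2}+6kn+2n^{2}}{k^{2}(2n+1)^{2}}.
\]
Substituting these three expressions into~(\ref{sored}) and collecting everything over the common denominator $k(2n+1)$ yields the claimed formula for $SO_{avr}(G)$.

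The main obstacle is the edge count itself: unlike the chains treated in Theorems~\ref{hxg} and~\ref{rph}, $GN(n,k)$ is a genuinely two-dimensional region, so the $(2,2)$- and $(2,3)$-edges are distributed along the whole boundary and the corner cells behave differently from the straight sides; care is needed there to avoid off-by-a-constant errors (for instance, $m_{22}$ turns out to depend only on $k$, which is easy to get wrong). Once the partition is pinned down, the remainder is elementary algebra together with an application of Proposition~\ref{main}.
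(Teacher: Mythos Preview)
Your proposal is correct and follows essentially the same route as the paper: identify the edge partition $m_{22}=2k+4$, $m_{23}=4(n+k-2)$, $m_{33}=6nk-5n-5k+4$, invoke Proposition~\ref{main}, and then specialize to $a=0,1,\bar d$ with $|V|=2k(2n+1)$ and $|E|=(6n+1)k-n$. The paper simply asserts the edge counts without the boundary-walk justification or the handshake consistency check you outline, so your write-up is in fact more detailed on the one nontrivial step.
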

\begin{proof}
	From the structure of graphene $GN(n,k)$, it is easy to see that there exists only $(2,2)$, $(2,3)$ and $(3,3)$-type of edges. Since $m_{22}(G)=2k+4$, $m_{23}(G)=4n+4k-8$, $m_{33}(G)=6nk-5k-5n+4$, from Proposition \ref{main}, we have
	$$SO_{a}(G)=4(n+k-2)\sqrt{2a^2-10a+13}+2\sqrt{2}(k+2)\cdot |2-a|+\sqrt{2}(6nk-5k-5n+4)\cdot |3-a|.$$
	Thus
	$$SO(G)=4\sqrt{13}(n+k-2)+\sqrt{2}(18nk-15n-11k+20),$$
	$$SO_{red}(G)=4\sqrt{5}(n+k-2)+\sqrt{2}(12nk-8k-10n+12).$$
	Since $|V(G)|=2(2n+1)k$, $|E(G)|=(6n+1)k-n$, we have $2<\bar{d}=\frac{(6n+1)k-n}{(2n+1)k}<3$. Thus
	\begin{align*}
			SO_{avr}&(G)=\frac{4(k+n-2)}{k(2n+1)}\sqrt{4k^2n^2-4k^2n+5k^2-4kn^2+6kn+2n^2}\\
			&+\frac{\sqrt{2}(16k^2n-12k^2+6kn^2-9kn+4k-5n^2)}{k(2n+1)}.
	\end{align*}

	The proof is completed.
\end{proof}

A coronoid system can be regarded as a benzenoid system that is allowed to have ‘holes’ such that the perimeter of the coronoid system and the perimeters of the holes are pairwise disjoint. There are many results on topological index of coronoid systems \cite{julv2020,crug2020}. We now consider a special family of coronoid systems, denoted by $K(n,p,r)$ (see Figure \ref{n2}), which is formally generated from polycyclic benzenoid systems by circumcising some interior atoms or bonds.

\begin{theo}\label{Knpr}
	Let $G$ be the $K(n,p,r)$ coronoid structure                                                                                                                                                                                                                                                                                                                                                                                                                                                                                                                                                                                                                                                                                                                                                                                                                                                                                                                                                                                                                                                                                                                                                                                                                                                                                                                                                                                                                                                                                                                                                                                                                                                                                                                                                                                                                                                                                                                                                                                                                                                                                                                                                                                                                                                                                                                                                                                                                                                                                                                                                                                                                                                                                                                                                                                                                                                                                                                                                                                                                                                                                                                                                                                                                                                                                                                                                                                                                                                                                                                                                                                                                                                                                                                                                                                                                                                                                   with $r\geq 1$, $n\geq 3$ and $1\leq p \leq n$. Then
	$$SO(G)=4\sqrt{13}(2n+4p+3r-6)+3\sqrt{2}[3(3r-2)(2p+n)+9r^2-15r+16],$$                             
	$$SO_{red}(G)= 4\sqrt{5}(2n+4p+3r-6)+\sqrt{2}[4(3r-2)(2p+n)+18r^2-30r+30],$$
	$$SO_{avr}(G)= \frac{1}{r+1}[4\sqrt{r^2+1}(2n+4p+3r-6)+\sqrt{2}(1+6r)].$$
\end{theo}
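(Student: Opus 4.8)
The proof will follow the same scheme as the graphene case above: reduce everything to the counts $m_{22}(G)$, $m_{23}(G)$, $m_{33}(G)$ and then invoke Proposition \ref{main}. First I would read off from the picture of $K(n,p,r)$ in Figure \ref{n2} that, as for any coronoid system, every vertex has degree $2$ (the vertices on the outer perimeter and on the perimeter of the hole) or degree $3$ (the remaining, interior vertices), so that the only edge types present are $(2,2)$, $(2,3)$ and $(3,3)$, and Proposition \ref{main} is applicable.

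The real content is the edge count. I would determine $m_{22}(G)$ and $m_{23}(G)$ by traversing the outer boundary hexagon-by-hexagon and then the inner (hole) boundary, recording in terms of $n$, $p$, $r$ how long each maximal run of consecutive degree-$2$ vertices is along a straight side and how these runs abut the degree-$3$ vertices at the turning corners; summing the incident-edge contributions then gives $m_{22}(G)$ and $m_{23}(G)$ (one expects $m_{23}(G)=4(2n+4p+3r-6)$, consistent with the $\sqrt{13}$- and $\sqrt{5}$-coefficients in the statement). The count $m_{33}(G)$ is then cheapest to obtain from $m_{22}(G)+m_{23}(G)+m_{33}(G)=|E(G)|$ together with $2|E(G)|=\sum_i d_i = 2n_2+3n_3$, where $n_2$, $n_3$ are the numbers of degree-$2$ and degree-$3$ vertices (equivalently, from the hexagon count of $K(n,p,r)$ via Euler's formula). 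The main obstacle is exactly this bookkeeping: because the coronoid has a genuine hole whose perimeter interacts with the parameter $p$, the tally of boundary vertices --- hence of $(2,2)$- and $(2,3)$-edges --- will probably need a short case split according to how $p$ compares with $n$ (e.g.\ whether the hole reaches the outer boundary), and the corner regions of both closed boundaries must be treated carefully to avoid off-by-one errors.

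Once $m_{22}(G)$, $m_{23}(G)$, $m_{33}(G)$ are in hand the rest is mechanical. Substituting them into (\ref{sored}) gives $SO_a(K(n,p,r))$ for arbitrary $a$; putting $a=0$ and $a=1$ yields the stated formulas for $SO(G)$ and $SO_{red}(G)$. For the average index I would compute $|V(G)|$ and $|E(G)|$ from the hexagon count --- the ratio simplifies to $\bar{d}=\frac{2|E(G)|}{|V(G)|}=\frac{3r+2}{r+1}$, depending only on $r$ --- note that $2<\bar{d}<3$, so that $|2-\bar{d}|=\frac{r}{r+1}$, $|3-\bar{d}|=\frac{1}{r+1}$ and $\sqrt{2\bar{d}^2-10\bar{d}+13}=\frac{\sqrt{r^2+1}}{r+1}$, and then substitute $a=\bar{d}$ into (\ref{sored}) to obtain the displayed closed form for $SO_{avr}(G)$. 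No inequality or limiting argument is needed.
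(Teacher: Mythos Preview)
Your plan is correct and matches the paper's approach exactly: count $m_{22}$, $m_{23}$, $m_{33}$ and plug into Proposition~\ref{main}, then specialize $a=0,1,\bar d$ with $\bar d=\frac{3r+2}{r+1}$. The paper simply records the counts $m_{22}(G)=6$, $m_{23}(G)=8(2p+n)+12(r-2)$ (your guess $4(2n+4p+3r-6)$), $m_{33}(G)=2(3r-2)(2p+n)+3(3r^2-5r+4)$ without derivation, and no case split on $p$ versus $n$ is needed.
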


\begin{proof}
	From the structure of $K(n,p,r)$ coronoid structure, it is easy to see that there exists only $(2,2)$, $(2,3)$ and $(3,3)$-type of edges. Since $m_{22}(G)=6$, $m_{23}(G)=8(2p+n)+12(r-2)$, $m_{33}(G)=2(3r-2)(2p+n)+3(3r^2-5r+4)$, from Proposition \ref{main}, we have
	$$SO_{a}(G)=4\sqrt{2a^2-10a+13}(4p+2n+3r-6)+6\sqrt{2}\cdot |2-a|+|3-a|\cdot \sqrt{2}[2(3r-2)(2p+n)+3(3r^2-5r+4)].$$
	Thus
	$$SO(G)=4\sqrt{13}(2n+4p+3r-6)+3\sqrt{2}[3(3r-2)(2p+n)+9r^2-15r+16],$$
	$$SO_{red}(G)=4\sqrt{5}(4p+2n+3r-6)+\sqrt{2}[4(3r-2)(n+2p)+18r^2-30r+30].$$
	Since $|V(G)|=2(r+1)(4p+2n+3r-3)$, $|E(G)|=(3r+2)(4p+2n+3r-3)$, we have $2<\bar{d}=\frac{3r+2}{r+1}<3$. Thus
	$$SO_{avr}(G)=\frac{1}{r+1}[4\sqrt{r^2+1}(2n+4p+3r-6)+\sqrt{2}(1+6r)].$$

	The proof is completed.
\end{proof}

\begin{figure}[h]
	\centering
	\scalebox{.10}[.1]{\includegraphics{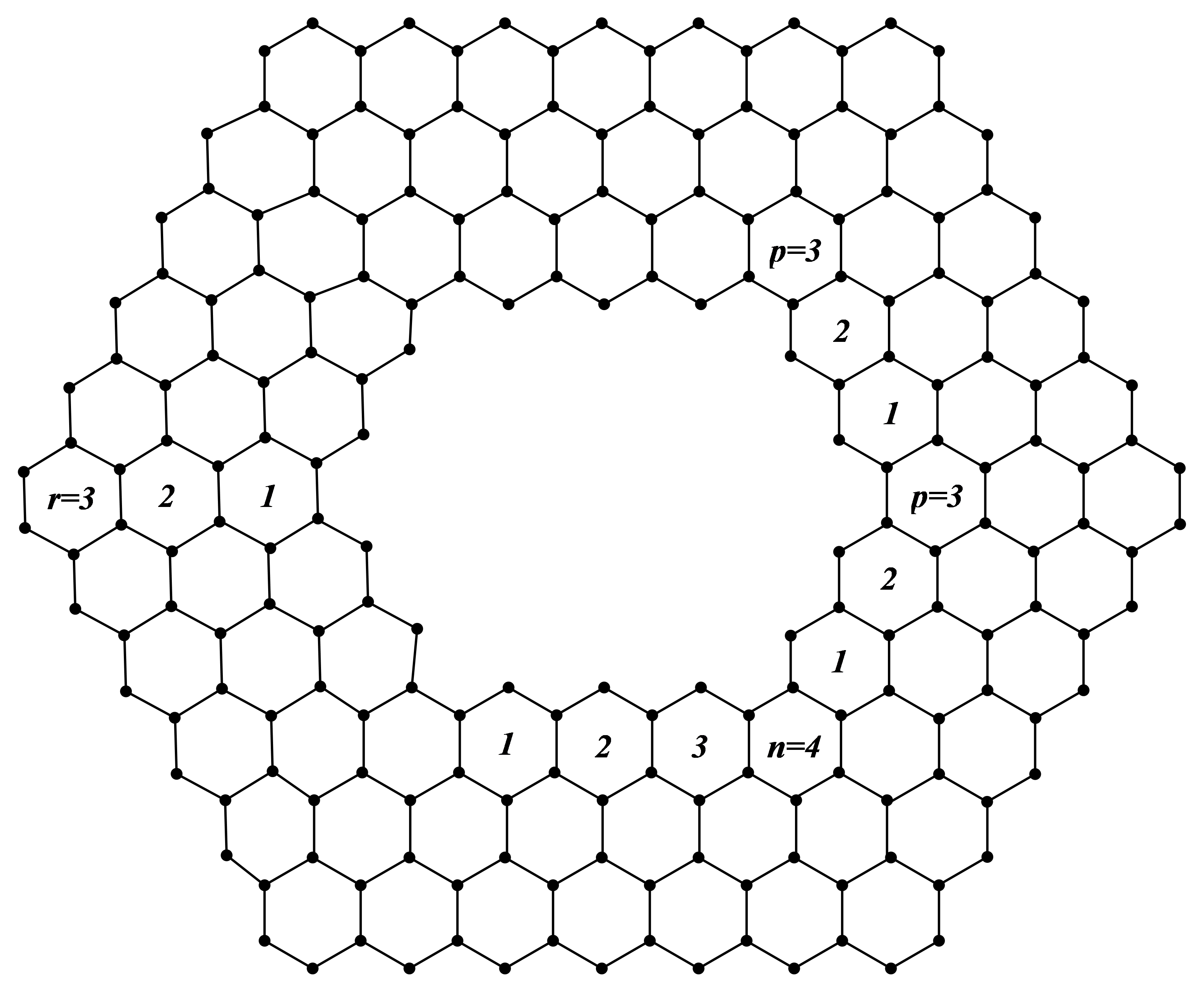}}
	\caption{Structure of coronoid system $K(n,p,r)$.}
	\label{n2}
\end{figure}

From Theorem \ref{Knpr}, it is easy to obtain Corollary \ref{K1} and Corollary \ref{K2} as special cases. More precisely, we use the previous theorem on $K(2,1,r)$ and $K(2,2,r)$ to compute the indices for $r$-circumscribed $C_{32}H_{16}$ and $C_{48}H_{24}$ coronoid structures.

\begin{cor}\label{K1}
Let $G$ be an $r$-circumscribed $C_{32}H_{16}$ coronoid structure {\rm ($r\geq 1$)}. Then
$$SO(G)= 4\sqrt{13}(2+3r)+3\sqrt{2}(9r^2+21r-8) ,$$                             
$$SO_{red}(G)= 4\sqrt{5}(2+3r)+2\sqrt{2}(9r^2+9r-1),$$
$$SO_{avr}(G)= \frac{1}{r+1}[4\sqrt{r^2+1}(2+3r)+\sqrt{2}(1+6r)].$$
\end{cor}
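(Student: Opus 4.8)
The plan is to obtain Corollary~\ref{K1} as an immediate specialization of Theorem~\ref{Knpr}. The first step is the structural identification: the $r$-circumscribed $C_{32}H_{16}$ coronoid structure is exactly the member $K(2,1,r)$ of the family $K(n,p,r)$, i.e.\ the case $n=2$, $p=1$. This is confirmed by the vertex count $|V(K(n,p,r))|=2(r+1)(4p+2n+3r-3)$, which at $n=2$, $p=1$, $r=1$ equals $32$, matching the $32$ carbon atoms of $C_{32}$ (the $16$ hydrogens being attached to the degree-two perimeter vertices), and the $r$-fold circumscription of this base coronoid is precisely $K(2,1,r)$. I would remark that although Theorem~\ref{Knpr} is stated for $n\geq 3$, the edge-type counts $m_{22}$, $m_{23}$, $m_{33}$ it uses remain valid for $K(2,1,r)$, as one checks directly from Figure~\ref{n2}; this is the one place where a little care is needed.

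Given the identification, the remaining step is pure substitution of $n=2$, $p=1$ into the three closed-form expressions of Theorem~\ref{Knpr}, followed by elementary simplification. The two identities doing all the work are $2n+4p+3r-6=3r+2$ and $2p+n=4$. For the ordinary Sombor index the bracketed polynomial collapses to $3(3r-2)\cdot 4+9r^2-15r+16=9r^2+21r-8$, giving $SO(G)=4\sqrt{13}(3r+2)+3\sqrt{2}(9r^2+21r-8)$. For the reduced Sombor index it collapses to $4(3r-2)\cdot 4+18r^2-30r+30=18r^2+18r-2=2(9r^2+9r-1)$, giving $SO_{red}(G)=4\sqrt{5}(3r+2)+2\sqrt{2}(9r^2+9r-1)$. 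The average Sombor index formula of Theorem~\ref{Knpr} involves $n$ and $p$ only through the factor $2n+4p+3r-6$, so it yields $SO_{avr}(G)=\frac{1}{r+1}[4\sqrt{r^2+1}(3r+2)+\sqrt{2}(1+6r)]$ at once.

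Since the whole argument reduces to a structural identification plus one-line polynomial simplifications, there is no serious obstacle; the only genuinely content-bearing step is justifying that $C_{32}H_{16}$ (after $r$-circumscription) coincides with $K(2,1,r)$, after which the three stated identities follow by inspection of Theorem~\ref{Knpr}.
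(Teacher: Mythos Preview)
Your proposal is correct and follows exactly the paper's approach: the paper explicitly states that Corollary~\ref{K1} is obtained by applying Theorem~\ref{Knpr} to $K(2,1,r)$, and your substitutions $2n+4p+3r-6=3r+2$, $2p+n=4$ carry out precisely that specialization. Your observation that Theorem~\ref{Knpr} is formally stated only for $n\geq 3$ while $n=2$ is needed here is a careful point the paper glosses over; otherwise the arguments are identical.
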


\begin{cor}\label{K2}
Let $G$ be an $r$-circumscribed $C_{48}H_{24}$ coronoid structure {\rm ($r\geq 1$)}. Then
$$SO(G)= 12\sqrt{13}(2+r)+3\sqrt{2}(9r^2+39r-20) ,$$                             
$$SO_{red}(G)= 12\sqrt{5}(2+r)+6\sqrt{2}(3r^2+7r-3),$$
$$SO_{avr}(G)= \frac{1}{r+1}[12\sqrt{r^2+1}(2+r)+\sqrt{2}(1+6r)].$$
\end{cor}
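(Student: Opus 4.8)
The plan is to obtain this corollary as an immediate specialization of Theorem~\ref{Knpr}. The first step is to recognize that an $r$-circumscribed $C_{48}H_{24}$ coronoid structure is precisely the graph $K(2,2,r)$ in the notation of that theorem, in the same way that an $r$-circumscribed $C_{32}H_{16}$ structure is $K(2,1,r)$; this identification is read off from the molecular formula exactly as for Corollary~\ref{K1}. Equivalently, one checks that the edge counts $m_{22}(G)=6$, $m_{23}(G)=8(2p+n)+12(r-2)$, $m_{33}(G)=2(3r-2)(2p+n)+3(3r^2-5r+4)$ from the proof of Theorem~\ref{Knpr} specialize correctly at $n=p=2$. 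Once this is granted, the rest is bookkeeping.

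Next I would substitute $n=2$ and $p=2$ into the three closed forms of Theorem~\ref{Knpr}. The two quantities doing all the work are $2n+4p+3r-6 = 3r+6 = 3(r+2)$ and $2p+n = 6$, so the common factor $2n+4p+3r-6$ collapses to $3(r+2)$ everywhere. For $SO$: the first term becomes $4\sqrt{13}\cdot 3(r+2) = 12\sqrt{13}(2+r)$, and the prefactor $3\sqrt2$ multiplies $3(3r-2)\cdot 6 + 9r^2-15r+16 = 9r^2+39r-20$, giving the stated value. For $SO_{red}$: the first term becomes $4\sqrt{5}\cdot 3(r+2) = 12\sqrt5(2+r)$, and the prefactor $\sqrt2$ multiplies $4(3r-2)\cdot 6 + 18r^2-30r+30 = 18r^2+42r-18 = 6(3r^2+7r-3)$. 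For $SO_{avr}$: substituting into $\frac{1}{r+1}\bigl[4\sqrt{r^2+1}(2n+4p+3r-6)+\sqrt2(1+6r)\bigr]$ gives $\frac{1}{r+1}\bigl[12\sqrt{r^2+1}(2+r)+\sqrt2(1+6r)\bigr]$ directly, since the $\sqrt2(1+6r)$ term and the $\frac{1}{r+1}$ factor are unaffected by the substitution.

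The remaining work is verifying these polynomial simplifications, which is routine arithmetic. I expect no genuine obstacle: the only point requiring any care is the geometric identification of the $C_{48}H_{24}$ structure with $K(2,2,r)$, and once Theorem~\ref{Knpr} is available even that reduces to choosing the parameters correctly. The main thing to avoid is dropping a factor when expanding $(3r-2)\cdot 6$ and recombining it with the quadratic tails $9r^2-15r+16$ and $18r^2-30r+30$.
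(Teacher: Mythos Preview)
Your proposal is correct and follows exactly the approach indicated in the paper: the paper states that Corollaries~\ref{K1} and~\ref{K2} are obtained by applying Theorem~\ref{Knpr} to $K(2,1,r)$ and $K(2,2,r)$ respectively, and your substitution $n=p=2$ together with the arithmetic simplifications is precisely this specialization carried out in detail.
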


Carbon nanocones, denoted by $CNC_k(n)$, are conical structures, which are conceived as curved forms of graphite sheet obtained by excising a wedge and subsequently joining the edges (see Figure \ref{n3}). Carbon nanocones have a wide range of applications, such as caping ultrafine gold needles, which attracted the attention of both theoretical and experimental chemists. There are many results on topological index of carbon nanocones \cite{aroc2019,nazf2018}.

\begin{figure}[h]
	\centering
	\scalebox{.1}[.1]{\includegraphics{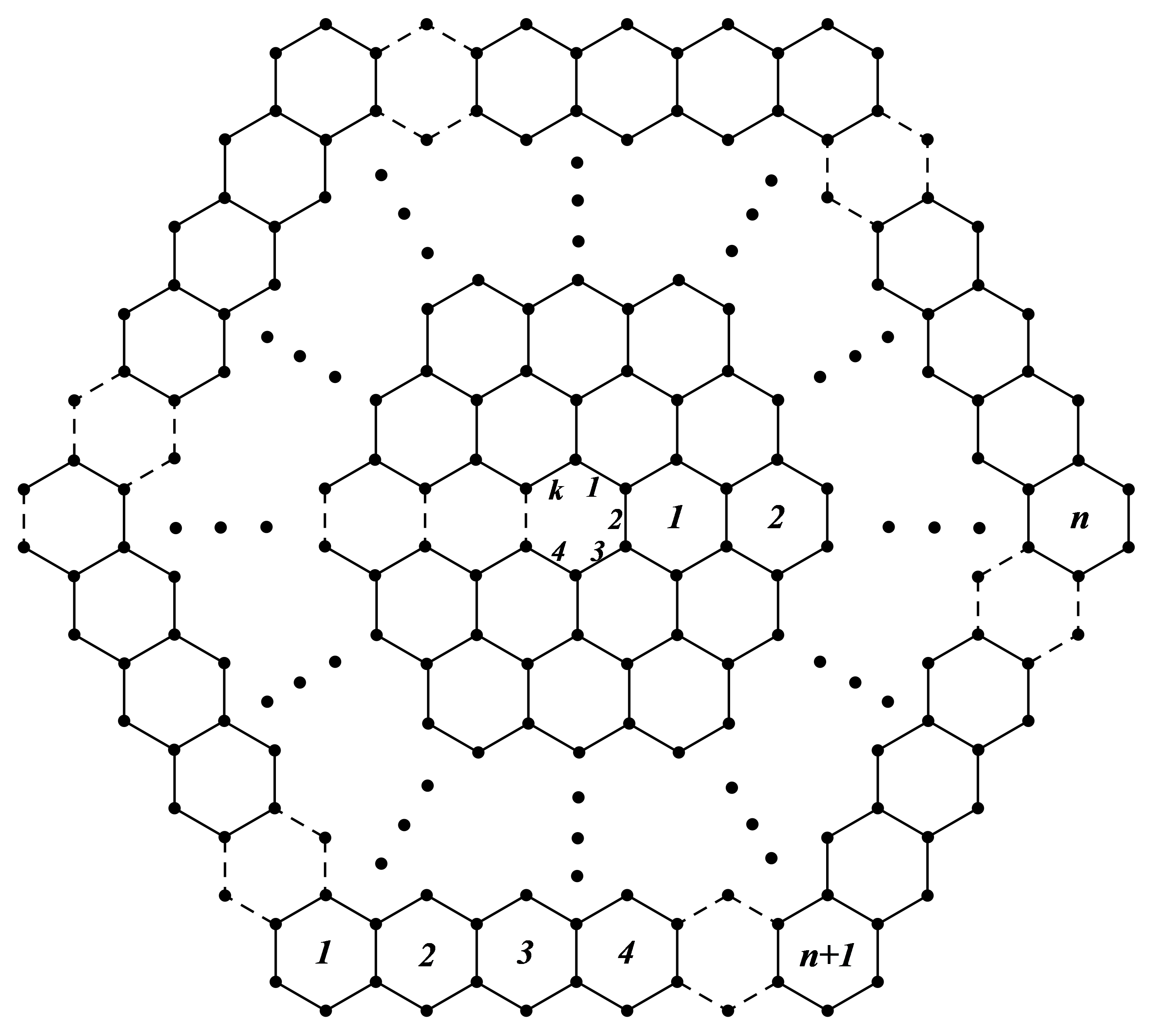}}
	\caption{Structure of carbon nanocone $CNC_k(n)$.}
	\label{n3}
\end{figure}

\begin{theo}
Let $G$ be the carbon nanocone structure $CNC_k(n)$ with $k>4$ and $n \geq 1$. Then
	$$SO(G)=2\sqrt{13}kn+\frac{\sqrt{2}k}{2}(9n^2+3n+4),$$      
	$$SO_{red}(G)= 2\sqrt{5}kn+\sqrt{2}k(3n^2+n+1),$$
	$$SO_{avr}(G)=\frac{kn}{2}\left(\frac{4}{n+1}\sqrt{n^2+1}+3\sqrt{2}\right).$$
\end{theo}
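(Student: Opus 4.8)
The plan is to follow exactly the template already used for graphene and for the coronoid family $K(n,p,r)$: once the edge partition of $CNC_k(n)$ is determined, everything reduces to Proposition~\ref{main}.

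First I would check that $CNC_k(n)$ carries only $(2,2)$, $(2,3)$ and $(3,3)$-type edges. This follows because the conical lattice is cubic in its interior while its outer perimeter consists of degree-$2$ vertices only (excising the wedge and re-identifying the cut edges produces neither a vertex of degree $1$ nor one of degree $\geq 4$), so the endpoints of every edge have degrees in $\{2,3\}$.

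The main step is the combinatorial count. Decomposing the cone into its $n$ concentric hexagonal layers around the apex $k$-gon and counting edge types layer by layer, I expect
\[
m_{22}(G)=k,\qquad m_{23}(G)=2kn,\qquad m_{33}(G)=\tfrac12 kn(3n+1).
\]
As an internal consistency check these are compatible with $|V(G)|=k(n+1)^2$, $|E(G)|=m_{22}+m_{23}+m_{33}=\tfrac12 k(n+1)(3n+2)$, and the handshake identities $2m_{22}+m_{23}=2n_2$, $m_{23}+2m_{33}=3n_3$ with $n_2=k(n+1)$, $n_3=kn(n+1)$; I would also verify the small case $n=1$ directly against Figure~\ref{n3}. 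Getting this boundary bookkeeping exactly right is the only real obstacle, since the apex polygon and the seam left by the excised wedge do not behave like an ordinary flat benzenoid patch.

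With the partition in hand, the remaining computations are pure substitution into \eqref{sored}. Setting $a=0$ gives $SO(G)=2\sqrt2\,m_{22}+\sqrt{13}\,m_{23}+3\sqrt2\,m_{33}=2\sqrt{13}kn+\tfrac{\sqrt2 k}{2}(9n^2+3n+4)$; setting $a=1$ gives $SO_{red}(G)=\sqrt2\,m_{22}+\sqrt5\,m_{23}+2\sqrt2\,m_{33}=2\sqrt5 kn+\sqrt2 k(3n^2+n+1)$. For the average index I would record $\bar d=\frac{2|E(G)|}{|V(G)|}=\frac{3n+2}{n+1}\in(2,3)$, so that $|2-\bar d|=\frac{n}{n+1}$, $|3-\bar d|=\frac{1}{n+1}$, and $2\bar d^2-10\bar d+13=\frac{n^2+1}{(n+1)^2}$; substituting $a=\bar d$ into \eqref{sored} and simplifying yields
\[
SO_{avr}(G)=\frac{2kn\sqrt{n^2+1}}{n+1}+\frac{\sqrt2\,kn}{n+1}+\frac{\sqrt2\,kn(3n+1)}{2(n+1)}=\frac{kn}{2}\!\left(\frac{4}{n+1}\sqrt{n^2+1}+3\sqrt2\right),
\]
which is the stated formula. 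No step beyond the edge count presents any difficulty.
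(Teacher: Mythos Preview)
Your proposal is correct and follows essentially the same route as the paper's own proof: identify the edge partition $m_{22}=k$, $m_{23}=2kn$, $m_{33}=\tfrac12 kn(3n+1)$, then substitute into Proposition~\ref{main} with $a=0,1,\bar d$ and use $\bar d=\tfrac{3n+2}{n+1}\in(2,3)$. Your additional consistency checks (vertex/edge totals, handshake identities, the explicit values $|2-\bar d|$, $|3-\bar d|$, $2\bar d^2-10\bar d+13$) go a bit beyond what the paper writes out but are all correct and do not change the argument.
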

\begin{proof}
	From the structure of carbon nanocone $CNC_k(n)$, it is easy to see that there exists only $(2,2)$, $(2,3)$ and $(3,3)$-type of edges. Since $m_{22}(G)=k$, $m_{23}(G)=2kn$, $m_{33}(G)=kn(3n+1)/2$, from Proposition \ref{main}, we have
	$$SO_{a}(G)=2kn\sqrt{2a^2-10a+13}+\frac{\sqrt{2}k}{2}[2\cdot |2-a|+|3-a|\cdot (3n+1)n].$$
	Thus,
	$$SO(G)=2\sqrt{13}kn+\frac{\sqrt{2}k}{2}(9n^2+3n+4),$$
	$$SO_{red}(G)=2\sqrt{5}kn+\sqrt{2}k(3n^2+n+1).$$
	Since $|V(G)|=k(n+1)^2$, $|E(G)|=k(n+1)(3n+2)/2$, we have $2<\bar{d}=\frac{3n+2}{n+1}<3$. Thus
	$$SO_{avr}(G)=\frac{kn}{2}\left(\frac{4}{n+1}\sqrt{n^2+1}+3\sqrt{2}\right).$$

	The proof is completed.
\end{proof}
\begin{cor}
	Let $G$ be $n$-circumscribed one pentagonal carbon nanocone structure $CNC_5(n)$ with $n\geq 1$. Then
	$$SO(G)=10\sqrt{13}n+\frac{5\sqrt{2}}{2}(9n^2+3n+4),$$      
	$$SO_{red}(G)= 10\sqrt{5}n+5\sqrt{2}(3n^2+n+1),$$
	$$SO_{avr}(G)=\frac{5n}{2}\left(\frac{4}{n+1}\sqrt{n^2+1}+3\sqrt{2}\right).$$
\end{cor}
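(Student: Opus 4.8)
The plan is to obtain this corollary as an immediate specialization of the preceding theorem on general carbon nanocones $CNC_k(n)$, since an $n$-circumscribed one pentagonal carbon nanocone is exactly $CNC_k(n)$ with $k=5$; because $5>4$, the hypothesis $k>4$ of that theorem is satisfied, so all three closed forms apply verbatim.

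Concretely, first I would recall from the theorem the three expressions $SO(CNC_k(n))=2\sqrt{13}kn+\frac{\sqrt2 k}{2}(9n^2+3n+4)$, $SO_{red}(CNC_k(n))=2\sqrt5 kn+\sqrt2 k(3n^2+n+1)$, and $SO_{avr}(CNC_k(n))=\frac{kn}{2}\bigl(\frac{4}{n+1}\sqrt{n^2+1}+3\sqrt2\bigr)$, which were themselves produced by feeding the edge partition $m_{22}(G)=k$, $m_{23}(G)=2kn$, $m_{33}(G)=kn(3n+1)/2$ into Proposition \ref{main}. Then I would substitute $k=5$: $2\sqrt{13}kn\mapsto 10\sqrt{13}n$, $\frac{\sqrt2 k}{2}\mapsto\frac{5\sqrt2}{2}$, $2\sqrt5 kn\mapsto 10\sqrt5 n$, $\sqrt2 k(3n^2+n+1)\mapsto 5\sqrt2(3n^2+n+1)$, and $\frac{kn}{2}\bigl(\frac{4}{n+1}\sqrt{n^2+1}+3\sqrt2\bigr)\mapsto\frac{5n}{2}\bigl(\frac{4}{n+1}\sqrt{n^2+1}+3\sqrt2\bigr)$, which are precisely the three displayed formulas.

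If instead a self-contained argument were wanted, I would re-derive the edge partition of $CNC_5(n)$ directly from its structure — the $5$ edges of type $(2,2)$ on the outer rim, the $10n$ edges of type $(2,3)$, and the $5n(3n+1)/2$ interior edges of type $(3,3)$ — and then apply Proposition \ref{main} with $a=0$, $a=1$, and $a=\bar d=\frac{3n+2}{n+1}$, using $|V(CNC_5(n))|=5(n+1)^2$ and $|E(CNC_5(n))|=5(n+1)(3n+2)/2$ to confirm $2<\bar d<3$, which pins down the signs in $|2-a|$ and $|3-a|$.

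There is essentially no obstacle here: the statement is a one-line corollary, and the only care required is routine bookkeeping — checking that $k=5$ respects the constraint $k>4$ of the parent theorem, and, for the average Sombor index, resolving $|2-\bar d|=\bar d-2$ and $|3-\bar d|=3-\bar d$ correctly for $\bar d\in(2,3)$.
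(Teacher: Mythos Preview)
Your proposal is correct and matches the paper's approach exactly: the corollary is stated without proof immediately after the general $CNC_k(n)$ theorem, so it is obtained simply by substituting $k=5$, just as you describe.
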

\begin{figure}[h!]
	\centering
	\begin{minipage}{220pt}
		\centering
		\includegraphics[scale=0.5]{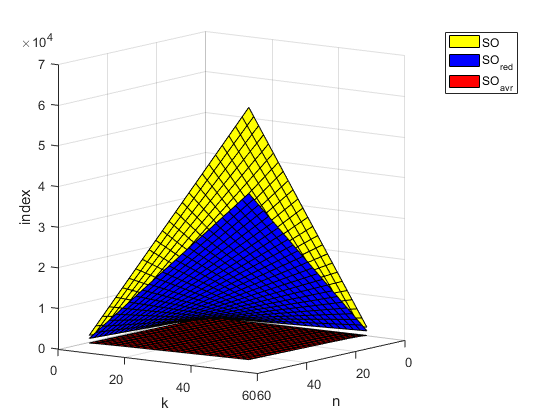}
		\caption{Differences between Sombor, reduced Sombor and average Sombor indices of $GN(n,k)$.}
		\label{gn}
	\end{minipage}
	\qquad
	\begin{minipage}{220pt}
		\centering
		\includegraphics[scale=0.5]{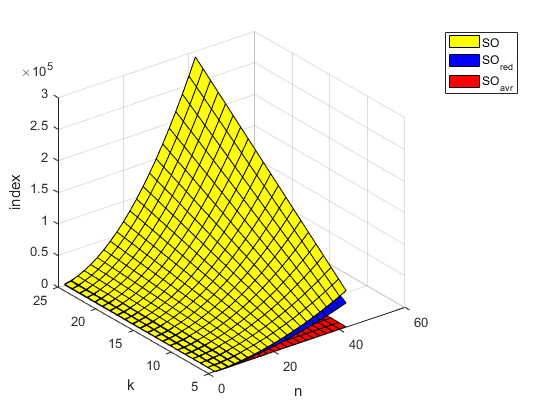}
		\caption{Differences between Sombor, reduced Sombor and average Sombor indices of $CNC_k(n)$.}
		\label{cnc}
	\end{minipage}
\end{figure}
\begin{table}[h!]
	\centering
	\begin{tabular}{llll}
		\hline
		$(n,p,r)$    & $SO(G)$  & $SO_{red}(G)$ & $SO_{avr}(G)$ \\
		\hline
		(3,1,1)&	207.02&	  116.35&	24.75\\
		(3,1,2)&	492.12&   261.98&	35.94\\
		(3,1,3)&	853.58&   458.51&	47.83\\
		(4,2,4)&	1759.79&  929.34&	79.64\\
		(4,2,5)&	2388.54&  1278.61&  92.29\\
		(4,2,6)&	3093.66&  1678.80&	104.80\\
		(5,2,1)&	373.31&   210.53& 	47.38\\
		(5,3,2)&	970.66&	  505.07&	71.72\\
		(5,4,3)&	1797.10&  918.41&	98.42\\
		(6,4,4)&	2696.53&  1376.08& 	119.22\\
		(6,4,5)&	3554.38&  1827.18&	133.08\\
		(6,4,6)&	4488.60&  2329.18&	146.51\\
		(9,5,7)&	6852.57&  3508.95&	194.98\\
		(9,6,8)&	8748.16&  4482.31&  222.69\\
		(9,7,9)&	10872.85& 5574.47&	250.46\\
		\hline
	\end{tabular}
	\caption{Numeric differences for $K(n,p,r)$}
	\label{knpr}
\end{table}

It's clear that $E[SO(G)]>E[SO_{red}(G)]>E[SO_{avr}(G)]$ for graph $G$ with only $(2,2)$, $(2,3)$ and $(3,3)$-type of edges. The graphical profiles of the comparison between Sombor, reduced Sombor and average Sombor indices of graphene $GN(n,k)$ or carbon nanocone structure $CNC_k(n)$ is give in Figure \ref{gn}, \ref{cnc}. The numerical comparison of the Sombor indices with respect to different types of $K(n,p,r)$ coronoid structure is give in Table \ref{knpr}.

\section{Conclusion}
\hspace{1.5em}
In this paper, the expected values of Sombor index, reduced Sombor index and average Sombor index have been determined for random hexagonal chains and random phenylene chains.  Explicit formulae for Sombor index, reduced Sombor index and average Sombor index of some chemical graphs such as graphene, coronoid systems and carbon nanocones are given. And detailed comparisons between these indices with respect to different chemical graphs have been determined explicitly. The structural characteristics of the compound can be deduced from the topological index formulae, which provides a theoretical basis for drug discovery and synthetic organic chemistry.

\end{document}